\DeclareMathAlphabet{\mathpzc}{OT1}{pzc}{m}{it}
\newcommand{\subfiguretitle}[1]{{\scriptsize{#1}} \\[1mm]}
\newcommand{\R}{\mathbb{R}}                                     
\newcommand{\C}{\mathbb{C}}                                     
\newcommand{\degree}{\,^{\circ}}                                
\newcommand{\verteq}{\rotatebox{90}{$\,=$}}                     
\newcommand{\unit}[1]{\ensuremath{\, \mathrm{#1}}}              
\newcommand{\textsub}[1]{\text{\tiny{#1}}}                      
\newcommand{\innerprod}[2]{\left\langle #1,\, #2 \right\rangle} 
\providecommand{\norm}[1]{\left\lVert #1 \right\rVert}          
\providecommand{\grad}{\nabla}                                  
\renewcommand{\P}{\mathcal{P}}                                  
\newcommand{\K}{\mathcal{K}}                                    
\newcommand{\Sp}{\mathcal{S}}                                   
\newcommand*{\expect}{\mathsf{E}}                               
\newcommand*{\prob}{\mathsf{P}}                                 
\newcommand{\qstate}{\mathbb{Q}}                                
\newcommand{\pstate}{\mathbb{P}}                                
\newcommand\xqed[1]{\leavevmode\unskip\penalty9999 \hbox{}\nobreak\hfill \quad\hbox{#1}}
\newcommand{\exampleSymbol}{\xqed{$\triangle$}}
\DeclareMathOperator{\mspan}{span}
\newtheorem{theorem}{Theorem}[section]
\newtheorem{lemma}[theorem]{Lemma}
\newtheorem{definition}[theorem]{Definition}
\theoremstyle{definition}
\newtheorem{example}[theorem]{Example}
\newtheorem{remark}[theorem]{Remark}
\renewcommand*\env@matrix[1][*\c@MaxMatrixCols c]{%
  \hskip -\arraycolsep
  \let\@ifnextchar\new@ifnextchar
  \array{#1}}
\title{On the numerical approximation of the \\ Perron--Frobenius and Koopman operator}
\author[1]{Stefan Klus}
\author[1]{P\'eter Koltai}
\author[1,2]{Christof Sch\"utte}
\affil[1]{Department of Mathematics and Computer Science, Freie Universit\"at Berlin, Germany}
\affil[2]{Zuse Institute Berlin, Germany}
\date{}
\begin{document}
\maketitle

\begin{abstract}
Information about the behavior of dynamical systems can often be obtained by analyzing the eigenvalues and corresponding eigenfunctions of linear operators associated with a dynamical system. Examples of such operators are the Perron--Frobenius and the Koopman operator. In this paper, we will review different methods that have been developed over the last decades to compute finite-dimensional approximations of these infinite-dimensional operators -- in particular Ulam's method and Extended Dynamic Mode Decomposition (EDMD) -- and highlight the similarities and differences between these approaches. The results will be illustrated using simple stochastic differential equations and molecular dynamics examples.
\end{abstract}

\section{Introduction}
\label{sec:Introduction}

The two main candidates for analyzing a dynamical system using operator-based approaches are the Perron--Frobenius and the Koopman operator. These two operators are adjoint to each other in appropriately defined function spaces and it should therefore theoretically not matter which one is used to study the system's behavior. Nevertheless, different methods have been developed for the numerical approximation of these two operators.

The Perron--Frobenius operator has been used extensively in the past to analyze the global behavior of dynamical systems stemming from a plethora of different areas such as molecular dynamics~\cite{PDHSM04,SS13}, fluid dynamics~\cite{FSvS14,FGTW15}, meteorology and atmospheric sciences~\cite{TvdBD15,TaLuLuDi15}, or engineering~\cite{VMS10,ober2015multiobjective}. Toolboxes for computing almost invariant sets or metastable states are available and efficiently approximate the system's behavior using adaptive box discretizations of the state space. An example of such a toolbox is GAIO~\cite{DFJ00}. This approach is, however, typically limited to low-dimensional problems.

Recently, several papers have been published focusing on data-based numerical methods to approximate the Koopman operator and to analyze the associated Koopman eigenvalues, eigenfunctions, and modes~\cite{BMM12, WKR14, WRK14}. These methods extract the relevant global behavior of dynamical systems and can, for example, be used to find lower-dimensional approximations of a system and to split a system into fast and slow subsystems as described in~\cite{FGH14a}. In many applications, the complex behavior of a dynamical system can be replicated by a small number of modes~\cite{WRK14}.

The approximation of the Perron--Frobenius operator typically requires short simulations for a large number of different initial conditions, which, without prior knowledge about the system, grows exponentially with the number of dimensions; the approximation of the Koopman operator, on the other hand, relies on potentially fewer, but longer simulations~\cite{BMM12}. However, we will show that this is not necessarily the case, the Perron--Frobenius operator can also be approximated using just a small number of long simulations. Thus, the latter approach might be well-suited for experimentally obtained data running just a few tests with different initial conditions for a longer time. Whether the numerically obtained operator then captures the full dynamics of the system, however, depends strongly on the initial conditions chosen.

While the Koopman operator is the adjoint of the Perron--Frobenius operator, the connections between different approaches to approximate these operators have -- to our knowledge -- not been fully described. In this paper, we will review different numerical methods to approximate the Perron--Frobenius operator and the Koopman operator and illustrate the similarities and differences between these approaches. We will mainly focus on simple stochastic differential equations and molecular dynamics applications.

The outline of this paper is as follows: In Section~\ref{sec:Transfer operators}, we will introduce the Perron--Frobenius operator and the Koopman operator and give a short description of basic properties. In Section~\ref{sec:Numerical approximation}, we will describe numerical methods (more precisely, generalized Galerkin methods) to obtain finite-dimensional representations of these operators and their eigenfunctions. Section~\ref{sec:Duality} illustrates the relationship between numerical methods developed for analyzing these operators. Section~\ref{sec:Examples} contains examples demonstrating the efficiency and characteristic properties of these numerical methods. A conclusion and possible future work will be outlined in Section~\ref{sec:Conclusion}. In Appendix~\ref{app:EDMD spatial essential}, we draw a connection between (i) extended dynamic mode decomposition applied to molecular dynamics simulation data and (ii) a special transfer operator used in molecular conformation analysis.

\section{Transfer operators}
\label{sec:Transfer operators}

\subsection{Perron--Frobenius operator}

\paragraph{Deterministic systems.}
Historically, transfer operators have been introduced in the field of \emph{ergodic theory}, where the main focus is on a \emph{measure-theoretic} characterization of the behavior of dynamical systems~\cite{Ko31,Hal56,Sin59,Orn70,LaMa94,BoGo12}. Due to this, the starting point of the considerations is often a \emph{measure space}~$(\mathbb{X},\mathfrak{B},\mu)$, a three-tuple of a \emph{state space}, a sigma-algebra, and a (probability) measure, respectively. The evolution of the state, usually in time, is described by a dynamical system~$\Phi : \mathbb{X} \to \mathbb{X}$, where~$\Phi$ is a $\mu$-measurable map. When not stated explicitly otherwise, time is considered to be discrete, hence a state~$x\in\mathbb{X}$ evolves as $ \{x, \Phi(x), \Phi^2(x), \ldots \}$. Nevertheless, most of the concepts carry over in a straightforward fashion to continuous-time systems, which we denote by~$\Phi^t$, $t \ge 0$.

In order to describe the statistical behavior of the dynamical system, we are interested in how~$\Phi$ affects distributions over state space. To this end, let us think of $f\in L^1(\mathbb{X}) := L^1(\mathbb{X},\mathfrak{B},\mu)$, with~$f\ge 0$ almost everywhere (a.e.) and~$\norm{f}_{L^1} = 1$, as the density of an $\mathbb{X}$-valued random variable~$\bm x$, we write $\bm x\sim f$. We wish to characterize the distribution of~$\Phi(\bm x)$. It turns out that if~$\Phi$ is \emph{non-singular}\footnote{$\Phi$ is (measure-theoretically) non-singular with respect to~$\mu$ if $\mu\circ\Phi^{-1}\ll \mu$; i.e., $\mu\circ\Phi^{-1}$ is absolutely continuous with respect to~$\mu$. This condition ensures that~$\Phi$ does not map sets of positive measure to sets of zero measure, that is, it can not destroy probability measure.} with respect to~$\mu$, then there is a $g\in L^1(\mathbb{X})$ such that~$\Phi(\bm x)\sim g$, and $\int_{\mathbb{A}} g\,d\mu = \int_{\Phi^{-1}(\mathbb{A})}f\,d\mu$ for all $\mathbb{A}\in\mathfrak{B}$. The mapping $f\mapsto g$ can be linearly extended to a linear operator $\P: L^1(\mathbb{X})\to L^1(\mathbb{X})$,
\begin{equation*}
    \int_{\mathbb{A}} \P f\,d\mu = \int_{\Phi^{-1}(\mathbb{A})}f\,d\mu, \quad \mathbb{A}\in\mathfrak{B},
\end{equation*}
the so-called \emph{Perron--Frobenius operator}~\cite{LaMa94,BoGo12}. It is a linear, \emph{positive} (i.e., $ f \ge 0 $ implies $ \P f\ge 0 $), \emph{non-expansive} (i.e., $\norm{\P f}_{L^1} \le \norm{f}_{L^1}$) operator, hence a \emph{Markov operator}. In addition, if the underlying measure~$\mu$ is \emph{invariant}, i.e., $\mu\circ \Phi^{-1} = \mu$, then $\P: L^p(\mathbb{X})\to L^p(\mathbb{X})$ is a well-defined non-expansive operator for every~$p\in[1, \infty]$; see~\cite{BaRo95,BoGo12}.

The Perron--Frobenius operator~$\P$ can be seen as a linear, infinite-dimensional representation of the nonlinear, finite-dimensional dynamical system~$\Phi$. To see the connection, consider for some~$x\in\mathbb{X}$ the \emph{Dirac distribution}~$\delta_x(\cdot)$ as an element of~$L^1(\mathbb{X})$, with~$\int_{\mathbb{A}}\delta_x(y)\,d\mu(y)=1$ if~$x\in\mathbb{A}$ and~$0$ otherwise. Then
\begin{equation*}
    \int_{\mathbb{A}} \P \delta_x\,d\mu = \int_{\Phi^{-1}(\mathbb{A})} \delta_x\,d\mu = \int_{\mathbb{A}} \delta_{\Phi(x)}\,d\mu\,,
\end{equation*}
such that the Perron--Frobenius operator moves the center of the Dirac distribution in accordance with the dynamics.

\paragraph{Non-deterministic systems.}

We define the non-deterministic dynamical system~$\bm \Phi$ as a mapping acting on~$\mathbb{X}$ such that $\bm \Phi(x)$ is an~$\mathbb{X}$-valued random variable over some implicitly given probability space. We assume that~$\bm\Phi$ possesses a \emph{transition density function}~$k:\mathbb{X}\times\mathbb{X}\to\R_{\ge 0}$ satisfying
\begin{equation} \label{eq:tdf}
    \prob(\bm\Phi(x)\in \mathbb{A}) = \int_{\mathbb{A}} k(x,y)\,d\mu(y),\quad \mathbb{A}\in\mathfrak{B}\,.
\end{equation}
Here, $\prob$ denotes the probability with respect to the underlying probability space and~\eqref{eq:tdf} essentially means that~$ \bm\Phi(x)\sim k(x,\cdot) $. The existence of a transition density function can be seen as an analogue to non-singularity in the deterministic case: it ensures that~$\bm\Phi$ does not concentrate significant probability mass in sets of zero measure\footnote{Such a mapping is also called in the literature ``$\mu$-compatible'' or ``null preserving''~\cite{Hopf54,Kre85}.}.

For such systems, it can be quickly seen that the Perron--Frobenius operator satisfies
\begin{equation} \label{eq:FPOnondet}
    \P f(y) = \int f(x) k(x,y)\,d\mu(x)\,,
\end{equation}
and that the Markov operator property holds as well. If the measure~$\mu$ is invariant, i.e., $\mu(\mathbb{A}) = \int_{\mathbb{A}}\int k(x,y)\,d\mu(x)\,d\mu(y)$ for every~$\mathbb{A}\in\mathfrak{B}$, then~$\P: L^p(\mathbb{X})\to L^p(\mathbb{X})$ is a well-defined non-expansive operator for every~$p\in[1,\infty]$, as in the deterministic case.

\emph{Invariant} (or \emph{stationary}) \emph{densities} play a special role. These are densities~$f$ (i.e., positive functions with unit~$L^1$ norm) which satisfy~$\P f = f$. If such a density~$f$ is unique, the system is called \emph{ergodic}, and satisfies for any $g\in L^p(\mathbb{X})$, $p\in[1,\infty]$, that
\begin{equation} \label{eq:ergthm}
    \lim_{n\to\infty}\frac1n \sum_{k=0}^{n-1}g(\bm\Phi^k x) = \int g f\,d\mu
\end{equation}
$\prob$-almost surely (a.s.) for $\mu$-a.e.~$x\in\mathrm{supp}(f)$, where~$\mathrm{supp}(f)$ is the set~$\{f>0\}$. With some additional assumptions on~$k$, the convergence in~\eqref{eq:ergthm} is geometric, with the rate governed by the second dominant eigenvalue of~$\P$.
In general, eigenfunctions associated with subdominant eigenvalues correspond to the slowly converging transients of the system and yield information about \emph{metastable} sets; sets between which a dynamical transition is a rare event. For more details, we refer to~\cite{MeTw12,SS13}.

\subsection{Koopman operator}

While the Perron--Frobenius operator describes the evolution of \emph{densities}, the Koopman operator describes the evolution of \emph{observables}~\cite{BMM12}. An observable could, for instance, be a measurement or sensor probe. That is, instead of analyzing an orbit $ \{x, \, \Phi(x), \, \Phi^2(x), \,  \dots \} $ of the dynamical system, we now consider the measurements $ \{f(x), \, f(\Phi(x)), \, f(\Phi^2(x)), \, \dots \} $.

The Koopman operator $ \K:L^{\infty}(\mathbb{X})\to L^{\infty}(\mathbb{X}) $, see e.g.~\cite{BMM12, WKR14, FGH14a}, is defined by
\begin{equation}
    \K f = f \circ \Phi\,.
    \label{eq:Koopmanop}
\end{equation}
The Koopman operator~$ \K $ is the adjoint of the Perron--Frobenius operator $ \mathcal{P} $, i.e.
\begin{equation*}
    \innerprod{\mathcal{P} f}{g}_{\mu} = \innerprod{f}{\K g}_{\mu},
\end{equation*}
where~$ \innerprod{\cdot}{\cdot}_{\mu} $ is the duality pairing between~$L^1$ and~$L^{\infty}$ functions. For specific combinations of~$ \Phi $ and~$ \mu $, the Koopman operator can be defined on~$L^2(\mathbb{X})$, too\footnote{For instance, if the measure~$\mu$ is invariant under~$\Phi$~\cite{BaRo95}; or if~$k\in L^{\infty}(\mathbb{X}\times\mathbb{X})$.}; in what follows, we assume that this is the case.

Again, $ \K $ is an infinite-dimensional linear operator that characterizes the finite-dimensional nonlinear system~$ \Phi $. To obtain the dynamics of a system defined on $ \mathbb{X} \subset \R^d $, use the set of observables $ g_i(x) = x_i $, $ i = 1, \dots, d $, or in shorthand, the \emph{vector-valued} observable~$ g(x) = x $, where $ g $ is called \emph{full-state observable}. On vector-valued functions, the Koopman operator acts componentwise.

In order to maintain duality with the Perron--Frobenius operator, for the non-deterministic system~$\bm\Phi$ with transition density function~$ k $, the Koopman operator is defined as
\begin{equation*}
    \K f(x) = \expect\left[f(\bm\Phi(x))\right] = \int k(x,y)f(y)\,d\mu(y),
\end{equation*}
where~$\expect[\cdot]$ denotes the expectation value with respect to the probability measure underlying~$\bm\Phi(x)$. Note that while the Koopman operator was defined here for a discrete-time dynamical system, the definition can be extended naturally to continuous-time dynamical systems as described in~\cite{BMM12}.

If $ \varphi_1 $ and $ \varphi_2 $ are eigenfunctions of the Koopman operator with eigenvalues $ \lambda_1 $ and $ \lambda_2 $, then also the product $ \varphi_1 \, \varphi_2 $ is an eigenfunction with eigenvalue $ \lambda_1 \lambda_2 $. The product of two functions is defined pointwise, i.e.~$ (\varphi_1 \, \varphi_2)(x) = \varphi_1(x) \, \varphi_2(x) $. Analogously, for any eigenfunction $ \varphi $ and $ r \in \R $, $ \varphi^r $ is an eigenfunction with eigenvalue $ \lambda^r $ assuming that $ \varphi(x) \ne 0 $ for $ r < 0 $.

\begin{example} \label{ex:Linear example}
Consider a linear dynamical system of the form $ x_{k+1} = A \, x_k $ with $ A \in \R^{d \times d} $, cf.~\cite{BMM12, WKR14}. Let $ A $ have $ d $ left eigenvectors\footnote{Here and in what follows, left eigenvectors are represented as row vectors.} $ w_i $ with eigenvalues $ \mu_i $, i.e. $ w_i \, A = \mu_i \, w_i $ for $ i = 1, \dots, d $. Then $ \varphi_i(x) = w_i \, x $ is an eigenfunction of the Koopman operator $ \K $ with corresponding eigenvalue $ \lambda_i = \mu_i $ since
\begin{equation*}
    (\K \varphi_i)(x) = \varphi_i(A \, x)
                      = w_i \, A \, x
                      = \mu_i \, w_i \, x
                      = \mu_i \,\varphi_i(x).
\end{equation*}
As described above, also products of these eigenfunctions
\begin{equation*}
    \varphi_l(x) = \prod_{i=1}^d (w_i \, x)^{l_i}
\end{equation*}
are eigenfunctions with corresponding eigenvalue $ \lambda_l = \prod_{i=1}^d \lambda_i^{l_i} $, where $ l \in \mathbb{N}_0^d $ is a multi-index. For
\begin{equation*}
    A =
    \begin{bmatrix}[rr]
         0.48 & -0.06 \\
        -0.16 &  0.52
    \end{bmatrix},
\end{equation*}
for example, the left eigenvectors are $ w_1 = [ 0.8, \, -0.6 ] $ and $ w_2 = \tfrac{1}{\sqrt{5}} [ 2, \, 1 ] $ with eigenvalues $ \mu_1 = 0.6 $ and $ \mu_2 = 0.4 $. The first eight nontrivial eigenfunctions of the Koopman operator are shown in Figure~\ref{fig:Linear example}. \exampleSymbol

\begin{figure}[htb]
    \centering
    \includegraphics[width=0.9\textwidth]{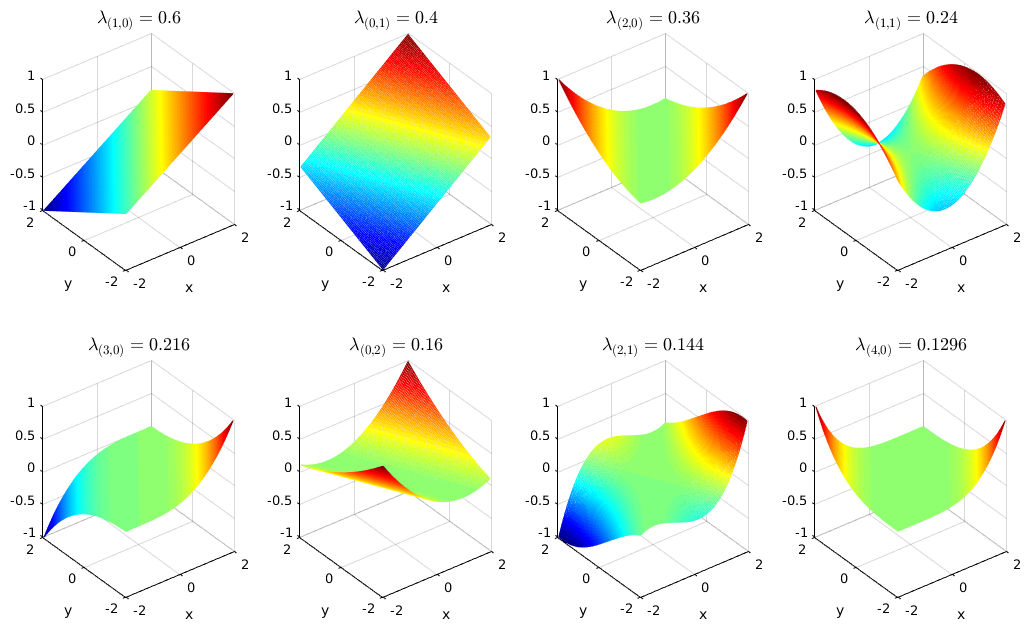}
    \caption{Eigenfunctions of the Koopman operator for the linear dynamical system described in Example~\ref{ex:Linear  example}.}
    \label{fig:Linear example}
\end{figure}
\end{example}

Let $ f : \mathbb{X} \to \R $ be an observable of the system that can be written as a linear combination of the linearly independent eigenfunctions $ \varphi_i $, i.e.
\begin{equation*}
    f(x) = \sum_i c_i \, \varphi_i(x),
\end{equation*}
with $ c_i \in \C $. Then
\begin{equation*}
    (\K f)(x) = \sum_i \lambda_i \, c_i \, \varphi_i(x).
\end{equation*}
Analogously, for vector-valued functions $ F = [f_1, \, \dots, \, f_n]^T $, we get
\begin{equation*}
    \K F =
    \begin{bmatrix}
        \sum_i \lambda_i \, c_{i, 1} \, \varphi_i \\
        \vdots \\
        \sum_i \lambda_i \, c_{i, n} \, \varphi_i
    \end{bmatrix} =
    \sum_i \lambda_i \, \varphi_i
    \begin{bmatrix}
        c_{i, 1} \\
        \vdots \\
        c_{i, n}
    \end{bmatrix} =
    \sum_i \lambda_i \, \varphi_i \, v_i,
\end{equation*}
where $ v_i = [c_{i, 1}, \, \dots, \, c_{i, n}]^T $. These vectors $ v_i $ corresponding to the eigenfunctions $ \varphi_i $ are called Koopman modes.

\begin{definition}
Given an eigenfunction $ \varphi_i $ of the Koopman operator $ \K $ and a vector-valued observable $ F $, the vector $ v_i $ of coefficients of the projection of $ F $ onto $ \mspan\{ \varphi_i \} $ is called Koopman mode.
\end{definition}

The connection between the dynamical system $ \Phi $ and the Koopman operator $ \K $ is given by the full-state observable $ g(x) = x $ and the corresponding Koopman eigenvalues $ \lambda_i $, eigenfunctions $ \varphi_i $, and eigenmodes $ v_i $ required to retrieve the full state~\cite{WKR14}. Since $ (\K g)(x) = (g \circ \Phi)(x) = \Phi(x) $ and, using the Koopman modes $ v_i $ belonging to $ g $,
\begin{equation*}
    (\K g)(x) = \sum_i \lambda_i \, \varphi_i(x) \, v_i,
\end{equation*}
we can compute $ \Phi(x) $ with the aid of the Koopman operator. A pictorial representation of the relationship between states and observables as well as the evolution operator and Koopman operator can be found in~\cite{WKR14}.

\section{Numerical approximation}
\label{sec:Numerical approximation}

\subsection{Generalized Galerkin methods}

The Galerkin discretization of an operator $ \mathcal{A} $ over some Hilbert space $\mathbb H$ can be described as follows. Suppose we have a finite-dimensional subspace $\mathbb V \subset \mathbb H$ with basis $(\psi_1,\ldots,\psi_k)$ given. The \emph{Galerkin projection} of $\mathcal A$ to $\mathbb V$ is the unique linear operator $ A: \mathbb{V} \to \mathbb{V} $ satisfying
\begin{equation} \label{eq:Galerkin}
    \innerprod{\psi_j}{\mathcal A\psi_i} = \innerprod{\psi_j}{A\psi_i},
    \quad \text{for all } i,j = 1,\ldots,k\,.
\end{equation}
If the operator $ \mathcal{A} $ is not given on a Hilbert space, just a Banach space, it can be advantageous to take \emph{basis functions} (with respect to which the projected operator is defined) and \emph{test functions} (which serve in \eqref{eq:Galerkin} to project objects not necessarily living in the same subspace) from different sets.

If $\mathcal A: \mathbb Y \to \mathbb Y$ is an operator on a Banach space $\mathbb Y$, $\mathbb V\subset \mathbb Y$ a subspace with basis $(\psi_1,\ldots,\psi_k)$, $\mathbb W \subset\mathbb Y^*$ a subspace of the dual of~$\mathbb Y$ with basis~$(\psi_1^*,\ldots,\psi_k^*)$, i.e.\ $\dim\mathbb V = \dim\mathbb W$, then the \emph{Petrov--Galerkin} projection of~$\mathcal A$ is the unique linear operator~$ A:\mathbb V \to \mathbb{V} $ satisfying
\begin{equation} \label{eq:PetrovGalerkin}
    \innerprod{\psi_j^*}{\mathcal A\psi_i} = \innerprod{\psi_j^*}{A\psi_i},
    \quad \text{for all } i,j = 1,\ldots,k\,,
\end{equation}
where $ \innerprod{\cdot}{\cdot} $ denotes the duality bracket.

This idea can be taken one step further, resulting in a Petrov--Galerkin-like projection even if $ l:=\dim \mathbb W > \dim \mathbb V $. In this case, \eqref{eq:PetrovGalerkin} is over-determined and the projected operator~$ A $ is defined as the solution of the least-squares problem
\begin{equation} \label{eq:PetrovGalerkinLS}
    \sum_{j=1}^l\sum_{i=1}^k \innerprod{\psi_j^*}{\mathcal A\psi_i-A \psi_i}^2 = \min!
\end{equation}
We refer to this as the \emph{over-determined Petrov--Galerkin} method.

\subsection{Ulam's method}

Probably the most popular method to date for the discretization of the Perron--Frobenius operator is Ulam's method; see e.g.~\cite{Ulam60,CU02, BS13, FGH14a}. Let $ \{ \mathbb{B}_1, \, \dots, \, \mathbb{B}_k \} \subset \mathfrak{B} $ be a covering of~$ \mathbb{X} $ by a finite number of disjoint measurable boxes and let~$ \mathds{1}_{\mathbb{B}_i} $ be the indicator function for box~$ \mathbb{B}_i $, i.e.
\begin{equation*}
    \mathds{1}_{\mathbb{B}_i}(x)
        = \begin{cases} 1,& \text{if } x \in \mathbb{B}_i, \\ 0,& \text{otherwise}. \end{cases}
\end{equation*}
Ulam's method is a Galerkin projection of the Perron--Frobenius operator to the subspace spanned by these indicator functions. More precisely, if one chooses the basis functions~$\psi_i = \tfrac{1}{\mu(\mathbb{B}_i)}\mathds{1}_{\mathbb{B}_i}$, then from the relationship
\begin{equation} \label{eq:Ulam_denum}
    \begin{split}
        \int \mathds{1}_{\mathbb{B}_j} \cdot \mathcal{P} \mathds{1}_{\mathbb{B}_i} \, d\mu
            &= \int (\mathds{1}_{\mathbb{B}_j} \circ \Phi) \cdot \mathds{1}_{\mathbb{B}_i} \, d\mu
            = \int \mathds{1}_{\Phi^{-1}(\mathbb{B}_j)} \cdot \mathds{1}_{\mathbb{B}_i} \, d\mu \\[2mm]
            &= \mu(\Phi^{-1}(\mathbb{B}_j) \cap \mathbb{B}_i)
    \end{split}
\end{equation}
we can represent the discrete operator by a matrix~$ P = (p_{ij}) \in \R^{k \times k} $ with
\begin{equation} \label{eq:p_ij}
    p_{ij} = \frac{\mu\left(\Phi^{-1}(\mathbb{B}_j) \cap \mathbb{B}_i\right)}{\mu(\mathbb{B}_i)}.
\end{equation}
The denominator $ \mu(\mathbb{B}_i) $ normalizes the entries~$ p_{ij} $ so that~$ P $ becomes a row-stochastic matrix. Thus, $ P $ defines a finite Markov chain and has a left eigenvector with the corresponding eigenvalue $ \lambda_1 = 1 $. This eigenvector approximates the invariant measure of the Perron--Frobenius operator~$ \mathcal{P} $~\cite{Li76, MurrPhD, Fr98, DJ99}.

The entries~$ p_{ij} $ of the matrix~$ P $ can be viewed as the probabilities of being mapped from box~$ \mathbb{B}_i $ to box~$ \mathbb{B}_j $ by the dynamical system~$\Phi$. These entries can be estimated by randomly choosing a large number of test points $ x_i^{(l)} $, $ l = 1, \dots, n $, in each box $ \mathbb{B}_i $ and counting the number of times $ \Phi(x_i^{(l)}) $ is contained in box $ \mathbb{B}_j $, that is,
\begin{equation} \label{eq:p_ij_approx}
    p_{ij} \approx \frac{1}{n} \sum_{l=1}^{n} \mathds{1}_{\mathbb{B}_j}(\Phi(x_i^{(l)}))\,.
\end{equation}
On the one hand, this is a Monte-Carlo approach to estimate the integrals in~\eqref{eq:Ulam_denum}, and hence a \emph{numerical realization} of Ulam's method. On the other hand, it is also an over-determined Petrov--Galerkin method~\eqref{eq:PetrovGalerkinLS} with test functionals~$\psi_{\ell}^*$ being point evaluations at the respective sample points~$x_{\ell}$; i.e., for a piecewise continuous function~$\varphi$ we have $\psi_{\ell}^*(\varphi) = \int\varphi\delta_{x_{\ell}}\,d\mu = \varphi(x_{\ell})$. One can see this by noting that due to the disjoint support of the basis functions~$\mathds{1}_{\mathbb{B}_i}$ the sum in~\eqref{eq:PetrovGalerkinLS} decouples and the entries of~$ P $ can be readily seen to be as on the right-hand side of~\eqref{eq:p_ij_approx}. The effect of Monte-Carlo sampling and the choice of the partition on the accuracy and convergence of Ulam's method has been investigated in~\cite{BoMu01, Mur04, KPPHD}.

\begin{remark}
We note that, given independent random test points $ x_i^{(l)}\in\mathbb{B}_i $, $ l = 1, \ldots, n $, expression~\eqref{eq:p_ij_approx} is a maximum-likelihood estimator for \eqref{eq:p_ij}.
This holds true in the non-deterministic case as well, where~\eqref{eq:p_ij} reads as
\begin{equation*}
    p_{ij} = \prob\left(\bm{\Phi}(x_i^{(l)})\in \mathbb{B}_j\right), 
\end{equation*}
and the~$ \Phi(x_i^{(l)}) $ in~\eqref{eq:p_ij_approx} are replaced by mutually independent realizations of~$ \bm\Phi(x_i^{(l)}) $, $ l = 1,\dots,n $.
\end{remark}

\subsection{Further discretization methods for the Perron--Frobenius operator}

\paragraph{Petrov--Galerkin type and higher order methods.}

Ulam's method is a zeroth order method in the sense that it uses piecewise constant basis functions. We can achieve a better approximation of the operator (and its dominant spectrum, in particular) if we use higher order piecewise polynomials in the Galerkin approximation; see~\cite{DiDuLi,DiZh96}.

If the eigenfunctions of the Perron--Frobenius operator are expected to have further regularity, the use of \emph{spectral methods} can be advantageous~\cite{Hub09,FrJuKo13}. Here, \emph{collocation} turns out to be the most efficient, in general; i.e., where basis functions are Fourier or Chebyshev polynomials~\cite{Boyd01}, and test functions are Dirac distributions centered in specific domain-dependent collocation points. \emph{Mesh-free} approaches with radial basis functions continuously gain popularity due to their flexibility  with respect to state space geometry~\cite{FrJu15,WRR15}.

A kind of regularity different from smoothness is if functions of interest do not vary simultaneously strongly in many coordinates, just in very few of them. \emph{Sparse-grid} type Galerkin approximation schemes~\cite{BuGr04} are well suited for such objects; their combination with Ulam's method has been considered in~\cite{JuKo09}.

Higher-order approximations do have, however, an unwanted disadvantage: the discretized operator is not a Markov operator (a stochastic matrix), in general~\cite[Section 3]{KPPHD}. This desirable structural property can be retained if one considers specific Petrov--Galerkin methods; cf.~\cite{DiLi91}, where the basis functions are piecewise first- or second-order polynomials and the test functions are piecewise constant.

\paragraph{Maximum entropy optimization methods.}

Let us consider a Petrov--Galerkin method for discretizing the Perron--Frobenius operator~$\P$, such that $\psi^*_j(\varphi) = \int h_j\varphi\,d\mu$ for suitable~$h_j\in L^{\infty}(\mathbb{X})$, $j=1,\ldots,k$. Then the image~$Pf$ of~$f\in\mathbb{V}$ is the unique element~$g\in\mathbb{V}$ such that
\begin{equation} \label{eq:momenteq1}
    \int (\P f - g)h_j\,d\mu = \int f ( h_j\circ\Phi) - g h_j\,d\mu = 0, \quad j=1,\ldots,k\,.
\end{equation}
One might as well alleviate the condition~$g\in\mathbb{V}$, at the cost of not having a unique solution to~\eqref{eq:momenteq1}. Then, in order to get a unique solution, one has to impose additional conditions on~$ g $. If one considers~\eqref{eq:momenteq1} as \emph{constraints}, one could formulate an optimization problem whose solution is $ g $. There is, of course, no trivial choice of objective functional for this optimization problem, however \emph{energy-type} (i.e.\ $\int g^2\,d\mu$) and \emph{entropy-type} (i.e.\ $\int g\log g\,d\mu$) objective functionals turned out to be advantageous to use~\cite{Ding98,BoMu06a,BoMu06b,BoMu07}. The reason for this is that the available convergence analysis for Ulam's method is quite restrictive~\cite{Li76,DiZh96,Fr98}, and these optimization-based methods yield novel convergent schemes to approximate invariant densities of non-singular dynamical systems -- to this end, one sets~$g=f$ in~\eqref{eq:momenteq1}. The down-side of this method is that in order to represent the approximate invariant density, one has to compute ``basis functions'' which arise as non-trivial combinations of the test functions~$h_j$ and the dynamics~$\Phi$.

\subsection{Extended dynamic mode decomposition}
\label{ssec:EDMD}

An approximation of the Koopman operator, the Koopman eigenvalues, eigenfunctions, and eigenmodes can be computed using \emph{Extended Dynamic Mode Decomposition} (EDMD). Note that we are using a slightly different notation than~\cite{WKR14, WRK14} here to make the relationship with other methods, in particular Ulam's method and \emph{Dynamic Mode Decomposition} (DMD, defined in Remark~\ref{rem:DMD} below), more apparent. In order to obtain EDMD, we take the basis functions $ \psi_i $, as above, and for the test function(al)s, we take delta distributions $ \delta_{x_j} $, that is, $ \innerprod{\delta_x}{\psi} = \psi(x)$. EDMD requires data, i.e.\ a set of values $ x_i $ and the corresponding $ y_i = \Phi(x_i) $ values, $ i = 1, \dots, m $, written in matrix form
\begin{equation} \label{eq:X and Y}
    X =
    \begin{bmatrix}
        x_1 & \cdots & x_m
    \end{bmatrix}
    \quad \text{and} \quad
    Y =
    \begin{bmatrix}
        y_1 & \cdots & y_m
    \end{bmatrix},
\end{equation}
and additionally a set of basis functions or observables
\begin{equation*}
    \mathbb{D} = \left\{ \psi_1, \, \psi_2, \, \dots, \, \psi_k \right\}
\end{equation*}
called dictionary. EDMD takes ideas from collocation methods, which are, for example, used to solve PDEs, where the $ x_i $ are the collocation points rather than a fixed grid~\cite{WRK14}. Writing
\begin{equation*}
    \Psi =
    \begin{bmatrix}
        \psi_1 & \psi_2 & \cdots & \psi_k
    \end{bmatrix}^T
\end{equation*}
as a vector of functions, that is $ \Psi : \mathbb{X} \to \R^k $, this yields
\begin{equation*}
    \Psi_Y^T = \Psi_X^T K,
\end{equation*}
with
\begin{equation*}
    \Psi_X =
    \begin{bmatrix}
        \Psi(x_1) & \dots & \Psi(x_m)
    \end{bmatrix}
    \quad \text{and} \quad
    \Psi_Y =
    \begin{bmatrix}
        \Psi(y_1) & \dots & \Psi(y_m)
    \end{bmatrix},
\end{equation*}
i.e.~$ \Psi_X, \Psi_Y \in \R^{k \times m} $. Here, $ K \in \mathbb{R}^{k \times k} $ applied from the right to vectors in $ \R^{1 \times k} $ represents the projection of $ \K $ with respect to the basis $ (\psi_1, \ldots, \psi_k) $. If the number of basis functions and test functions does not match, \eqref{eq:PetrovGalerkin} cannot be satisfied in general and a least squares solution of the (usually overdetermined) system of equations is given by applying~$ \Psi_X^+ $, the pseudoinverse of $ \Psi_X $, giving
\begin{equation} \label{eq:EDMD simple}
    K^T = \Psi_Y \, \Psi_X^+.
\end{equation}
A more detailed description can be found in Appendix~\ref{sec:Derivation of EDMD}. For the sake of convenience and to compare DMD and EDMD, we define $ M_\textsub{K} = K^T $. This approach becomes computationally expensive for large $ m $ since it requires the  pseudoinverse of the $ k \times m $ matrix $ \Psi_X $. Another possibility to compute $ K $ is
\begin{equation*}
    K^T = A \, G^+,
\end{equation*}
where the matrices $ A, \, G \in \R^{k \times k} $ are given by
\begin{equation} \label{eq:A and G entries}
    \begin{split}
        A &= \frac{1}{m} \sum_{l=1}^m \Psi(y_l) \, \Psi(x_l)^T, \\
        G &= \frac{1}{m} \sum_{l=1}^m \Psi(x_l) \, \Psi(x_l)^T.
    \end{split}
\end{equation}
In order to obtain the second EDMD formulation from the first, the relationship $ \Psi_X^+ = \Psi_X^T \, (\Psi_X \, \Psi_X^T)^+ $ was used. For a detailed derivation of these results, we refer to~\cite{WKR14, WRK14}.

An approximation of the eigenfunction $ \varphi_i $ of the Koopman operator $ \K $ is then given by
\begin{equation*}
    \varphi_i = \xi_i \, \Psi,
\end{equation*}
where $ \xi_i $ is the $ i $-th left eigenvector of the matrix $ M_\textsub{K} = K^T $.

\begin{example}
Let us consider the linear system described in Example~\ref{ex:Linear example} again. The eigenfunctions computed using EDMD with the basis functions $ \psi_l = x_1^{l_1} \, x_2^{l_2} $, $ 0 \le l_1, l_2 \le 5 $, are in very good agreement with the theoretical results. EDMD computes exactly the eigenfunctions shown in Figure~\ref{fig:Linear example} with negligibly small numerical errors $ \varepsilon < 10^{-10} $, where we computed the maximum difference between the eigenfunctions and their approximation. The first eight nontrivial eigenvalues of $ M_\textsub{K} $ are
\begin{align*}
    \lambda_2 &= 0.6000, &
    \lambda_3 &= 0.4000, &
    \lambda_4 &= 0.3600, &
    \lambda_5 &= 0.2400, \\
    \lambda_6 &= 0.2160, &
    \lambda_7 &= 0.1600, &
    \lambda_8 &= 0.1440, &
    \lambda_9 &= 0.1296. \tag*{\exampleSymbol}
\end{align*}
\end{example}

In order to obtain the Koopman modes for the full-state observable $ g(x) = x $  introduced above, define $ \boldsymbol{\varphi} = \left[ \varphi_1, \, \dots, \, \varphi_k \right]^T $ and let $ B \in \R^{d \times k} $ be the matrix such that $ g = B \, \Psi $, then $ \boldsymbol{\varphi} = \Xi \, \Psi $ and
\begin{equation*}
    g = B \, \Psi = B \, \Xi^{-1} \boldsymbol{\varphi},
\end{equation*}
where the matrix
\begin{equation*}
    \Xi =
    \begin{pmatrix}
        \xi_1 \\
        \xi_2 \\
        \vdots \\
        \xi_k
    \end{pmatrix}
\end{equation*}
contains all left eigenvectors of $ M_\textsub{K} $. Thus, the column vectors of the $ (d \times k) $-dimensional matrix $ V = B \, \Xi^{-1} $ are the Koopman modes $ v_i $ and
\begin{equation*}
    g = \sum_i \varphi_i \, v_i
    \quad \Rightarrow \quad
    \K g = \sum_i \lambda_i \, \varphi_i \, v_i.
\end{equation*}
Note that since $ \Xi $ is the matrix which contains all left eigenvectors of $ M_\textsub{K} $, the matrix $ \Xi^{-1} $ needed for reconstructing the full-state observable $ g $ contains all right eigenvectors of $ M_\textsub{K} $. That is, the Koopman eigenfunctions $ \boldsymbol{\varphi} = \Xi \, \Psi $ are approximated by the left eigenvectors of $ M_\textsub{K} $ and the Koopman modes $ V = B \, \Xi^{-1} $ by the right eigenvectors (cf.~\cite{WKR14}, with the difference that there the observables and eigenfunctions are written as column vectors and the data matrices $ \Psi_X $ and $ \Psi_Y $ are the transpose of our matrices; we chose to rewrite the EDMD formulation in order to illustrate the similarities with DMD and other methods).

\begin{example}
For Example~\ref{ex:Linear example} and the dictionary $ \psi_l = x_1^{l_1} \, x_2^{l_2} $, $ 0 \le l_1, l_2 \le 5 $, the matrix $ B \in \R^{2 \times 36} $ is zero except for the two entries corresponding to the functions $ \psi_{(1, 0)} = x_1 $ and $ \psi_{(0, 1)} = x_2 $. Thus, only the two eigenmodes $ v_{(1, 0)} \approx [0.5, \, -1]^T $ and $ v_{(0, 1)} \approx \frac{\sqrt{5}}{2} [0.6, \, 0.8 ]^T $ -- eigenvectors of $ A $ -- are required to construct the full-state observable, all the other eigenmodes are numerically zero. \exampleSymbol
\end{example}

\begin{remark}[\textbf{Convergence of EDMD to a Galerkin method}]
As described in~\cite{WKR14}, EDMD converges to a Galerkin approximation of the Koopman operator for large $ m $ if the data points are drawn according to a distribution~$ \mu $. Using the Galerkin approach, we would obtain matrices $ \widetilde{A} $ and $ \widetilde{G} $ with entries
\begin{equation*}
    \begin{split}
        \widetilde{a}_{ij} &= \innerprod{\K \psi_i}{\psi_j}_\mu, \\
        \widetilde{g}_{ij} &= \innerprod{\psi_i}{\psi_j}_\mu.
    \end{split}
\end{equation*}
Here, $ \innerprod{f}{g}_\mu = \int_\mathbb{X} f(x) \, g^*(x) \, d\mu(x) $. Then $ \widetilde{K}^T = \widetilde{A} \, \widetilde{G}^{-1} $ would be the finite-dimensional approximation of the Koopman operator $ \K $. Clearly, the entries $ a_{ij} $ and $ g_{ij} $ of the matrices $ A $ and $ G $ in \eqref{eq:A and G entries} converge to $ \widetilde{a}_{ij} $ and $ \widetilde{g}_{ij} $ for $ m \rightarrow \infty $, since
\begin{equation*}
    \begin{split}
        a_{ij} &= \frac{1}{m} \sum_{l=1}^m \psi_i(y_l) \psi_j(x_l)^*
                \underset{\scriptscriptstyle m \rightarrow \infty}{\longrightarrow} \int_\mathbb{X} (\K \psi_i)(x) \psi_j(x)^* d\mu(x)
                = \innerprod{\K \psi_i}{\psi_j}_\mu = \widetilde{a}_{ij}, 
    \end{split}
\end{equation*}  \begin{equation} \label{eq:EDMDtoGalerkin}
    \begin{split}      g_{ij} &= \frac{1}{m} \sum_{l=1}^m \psi_i(x_l) \psi_j(x_l)^*
                \underset{\scriptscriptstyle m \rightarrow \infty}{\longrightarrow} \int_\mathbb{X} \psi_i(x) \psi_j(x)^* d\mu(x)
                = \innerprod{\psi_i}{\psi_j}_\mu = \widetilde{g}_{ij}.
    \end{split}
\end{equation}
\end{remark}

\begin{remark}[\textbf{Variational approach for reversible processes}]
The EDMD approximation of the eigenfunctions of the Koopman operator is given by the left eigenvectors $ \xi $ of the matrix $ M_\textsub{K} = A \, G^+ $, i.e.~$ \xi \, M_\textsub{K} = \lambda \, \xi $, and can be -- provided that $ G $ is regular -- reformulated as a generalized eigenvalue problem of the form $ \xi \, A = \lambda \, \xi \,G $. This results in a method similar to the variational approach presented in~\cite{NoNu13} for reversible processes. A tensor-based generalization of this method can be found in \cite{NSVN15}.
\end{remark}

\begin{remark}[\textbf{DMD}] \label{rem:DMD}
Dynamic Mode Decomposition was first introduced in~\cite{SS08} and is a powerful tool for analyzing the behavior of nonlinear systems which can, for instance, be used to identify low-order dynamics of a system~\cite{TRLBK13}. DMD analyzes pairs of $ d $-dimensional data vectors $ x_i $ and $ y_i = \Phi(x_i) $, $ i = 1, \dots, m $, written again in matrix form \eqref{eq:X and Y}. Assuming there exists a linear operator $ M_\textsub{L} $ that describes the dynamics of the system such that $ y_i = M_\textsub{L} \, x_i $, define $ M_\textsub{L} = Y X^+ $. The DMD modes and eigenvalues are then defined to be the eigenvectors and eigenvalues of $ M_\textsub{L} $. The matrix $ M_\textsub{L} $ minimizes the cost function $ \norm{M_\textsub{L} \, X - Y}_F $, where $ \norm{.}_F $ is the Frobenius norm. There are different algorithms to compute the DMD modes and eigenvalues without explicitly computing $ M_\textsub{L} $ which rely on the (reduced) singular value decomposition of $ X $. For a detailed description, we refer to~\cite{TRLBK13}.
\end{remark}

\begin{remark}[\textbf{DMD and EDMD}]
The first EDMD formulation~\eqref{eq:EDMD simple} shows the relationship between DMD and EDMD. Let the vector of observables be given by $ \Psi(x) = x $. Then $ \Psi_X = X $ and $ \Psi_Y = Y $, thus
\begin{equation*}
    M_\textsub{K} = \Psi_Y \, \Psi_X^+ = Y \, X^+ = M_\textsub{L},
\end{equation*}
i.e.~the DMD matrix $ M_\textsub{L} $ is an approximation of the Koopman operator $ \mathcal{K} $ using only linear basis functions. Since $ B = I $, the Koopman modes are $ V = \Xi^{-1} $, which are the right eigenvectors of $ M_\textsub{K} $ and thus the right eigenvectors of $ M_\textsub{L} $, which illustrates that the Koopman modes in this case are the DMD modes. Hence, (exact) DMD can be regarded as a special case of EDMD.
\end{remark}

\begin{remark}[\textbf{Sparsity-promoting DMD}]
A variant of DMD aiming at maximizing the quality of the approximation while minimizing the number of modes used to describe the data is presented in \cite{JSN14}. Sparsity is achieved by using an $ \ell_1 $-norm regularization approach. The $ \ell_1 $-norm can be regarded as a convexification of the cardinality function. The resulting regularized convex optimization problem is then solved with an alternating direction method. That is, the algorithm alternates between minimizing the cost function and maximizing sparsity.
\end{remark}

In the same way, a sparsity-promoting version of EDMD could be constructed in order to minimize the number of basis functions required for the representation of the eigenfunctions.

\subsection{Kernel-based extended dynamic mode decomposition}

In some cases, it is possible to improve the efficiency of EDMD using the so-called \emph{kernel trick}~\cite{WRK14}. In fluid problems, for example, the number of measurement points $ k $ is typically much larger than the number of measurements or snapshots $ m $. Suppose $ f(x, y) = (1 + x^T \, y)^2 $ for $ x, y \in \R^2 $, then
\begin{equation*}
    \begin{split}
        f(x, y) = 1 + 2 \, x_1 \, y_1 + 2 \, x_2 \, y_2 + 2 \, x_1 \, x_2 \, y_1 \, y_2
                + x_1^2 \, y_1^2 + x_2^2 \, y_2^2
                = \Psi(x)^T \, \Psi(y)
    \end{split}
\end{equation*}
for the vector of observables $ \Psi(x) = \left[1, \,\sqrt{2} x_1, \,\sqrt{2} x_2, \,\sqrt{2} x_1 \, x_2, \, x_1^2, \,x_2^2\right]^T $. The kernel function $ f(x, y) = (1 + x^T \, y)^p $ for $ x, y \in \R^d $ will generate a vector-valued observable that contains all monomials of order up to and including $ p $. That is, instead of $ \mathcal{O}(k) $, the computation of the inner product is now $ \mathcal{O}(d) $ since inner products are computed implicitly by an appropriately chosen kernel function.

In~\cite{WRK14}, it is shown that any left eigenvector $ v $ of $ M_\textsub{K} $ for an eigenvalue $ \lambda \ne 0 $ can be written as $ v = \hat{v} \, \Psi_X^T $, with $ \hat{v} \in \R^m $. Using the relationship $ \Psi_X^+ = (\Psi_X^T \, \Psi_X)^+ \Psi_X^T $, we then obtain
\begin{equation*}
    \setlength\arraycolsep{1pt}
    \begin{array}{cl}
        v \, M_\textsub{K} &= \hat{v} \, \Psi_X^T \, M_\textsub{K} = \hat{v} \, \Psi_X^T (\Psi_Y \Psi_X^+)
                            = \hat{v} \, (\Psi_X^T \, \Psi_Y) (\Psi_X^T \, \Psi_X)^+ \Psi_X^T
                            = \hat{v} \, \hat{M}_\textsub{K} \, \Psi_X^T \\
        \verteq  & \\[-0.4em]
        \mu \, v &= \mu \, \hat{v} \, \Psi_X^T
    \end{array}
\end{equation*}
and thus a left eigenvector of $ M_\textsub{K} $ can be computed by a left eigenvector of $ \hat{M}_\textsub{K} = \hat{K}^T = \hat{A} \, \hat{G}^+ $ multiplied by $ \Psi_X^T $, where $ \hat{A} = \Psi_X^T \, \Psi_Y \in \R^{m \times m} $ and $ \hat{G} = \Psi_X^T \, \Psi_X \in \R^{m \times m} $. The entries of the matrices $ \hat{A} $ and $ \hat{G} $ can be computed efficiently by
\begin{equation*}
    \begin{split}
        \hat{a}_{ij} &= f(x_i, y_j), \\
        \hat{g}_{ij} &= f(x_i, x_j),
    \end{split}
\end{equation*}
using the kernel function $ f $. The computational cost for the eigenvector computation now depends on the number of snapshots $ m $ rather than the number of observables $ k $. For a more detailed description, we refer to~\cite{WRK14}.

\section{Duality}
\label{sec:Duality}

In this section, we will show how, given the eigenfunctions of the Koopman operator, the eigenfunctions of the adjoint Perron--Frobenius operator can be computed, or vice versa. The goal here is to illustrate the similarities between the different numerical methods presented in the previous sections and to adapt methods developed for one operator to compute eigenfunctions of the other operator. We will focus in particular on Ulam's method and EDMD.

\subsection{Ulam's method and EDMD}
Let us consider the case where the dictionary contains the indicator functions for a given box discretization $ \{ \mathbb{B}_1, \, \dots, \, \mathbb{B}_k \} $, i.e.\ $ \mathbb{D} = \{ \mathds{1}_{\mathbb{B}_1}, \, \dots, \, \mathds{1}_{\mathbb{B}_k} \} $. If we now select $ n $ test points $ x_i^{(l)} $, $ l = 1, \dots, n $, for each box, then
\begin{equation*}
    \Psi_X =
    \begin{pmatrix}
        \mathds{1}_n^T &                &        &                \\
                       & \mathds{1}_n^T &        &                \\
                       &                & \ddots &                \\
                       &                &        & \mathds{1}_n^T
    \end{pmatrix}
    \in \R^{k \times k \,n},
\end{equation*}
where $ \mathds{1}_n \in \R^n $ is the vector of all ones. The pseudoinverse of this matrix is $ \Psi_X^+ = \frac{1}{n} \Psi_X^T $ and the matrix $ M_\textsub{K} = \Psi_Y \, \Psi_X^+ \in \R^{k \times k} $ with entries $ \underbar{m}_{ij} $ has the following form
\begin{equation*}
    \underbar{m}_{ij} = \sum_{l=1}^{k\,n} (\Psi_Y)_{il} \, (\Psi_X)^+_{lj}
                      = \frac{1}{n} \sum_{l=1}^n \psi_i(y_j^{(l)}) \\
                      = \frac{1}{n} \sum_{l=1}^n \mathds{1}_{\mathbb{B}_i}(\Phi(x_j^{(l)})).
\end{equation*}
Comparing the entries $ \underbar{m}_{ij} $ of $ M_\textsub{K} $ with the entries $ p_{ij} $ of $ P $ in \eqref{eq:p_ij_approx}, it turns out that $ M_\textsub{K} = P^T $ and thus $ P = K $. That is, EDMD with indicator functions for a given box discretization computes the same finite-dimensional representation of the operators as Ulam's method.

\subsection{Computation of the dual basis}

For the finite-dimensional approximation, let $ \varphi_i $ be the eigenfunctions of $ \K $ and $ \widetilde{\varphi}_i $ the eigenfunctions of the adjoint operator $ \mathcal{P} $, $ i = 1, \dots, k $. Since
\begin{equation*}
        \innerprod{\K \varphi_i}{\widetilde{\varphi}_j}_\mu
            = \lambda_i \innerprod{\varphi_i}{\widetilde{\varphi}_j}_\mu \quad \text{and} \quad
        \innerprod{\varphi_i}{\mathcal{P} \widetilde{\varphi}_j}_\mu
            = \lambda_j \innerprod{\varphi_i}{\widetilde{\varphi}_j}_\mu,
\end{equation*}
subtracting these two equations gives $ 0 = (\lambda_i - \lambda_j) \innerprod{\varphi_i}{\widetilde{\varphi}_j}_\mu $. The left-hand side of the equation is zero due to the definition of the adjoint operator. Thus, if $ \lambda_i \ne \lambda_j $, the scalar product must be zero. Furthermore, $ \widetilde{\varphi}_j $ can be scaled in such a way that $ \innerprod{\varphi_i}{\widetilde{\varphi}_i}_\mu = 1 $. Hence, we can assume that $ \innerprod{\varphi_i}{\widetilde{\varphi}_j}_\mu = \delta_{ij} $.

Let now $ B = (b_{ij}) \in \C^{k \times k} $ and $ C = (c_{ij}) \in \C^{k \times k} $. Define $ b_{ij} = \innerprod{\varphi_i}{\varphi_j}_\mu $ and write
\begin{equation*}
    \widetilde{\varphi}_j = \sum_{l=1}^k c_{jl} \, \varphi_l,
\end{equation*}
then
\begin{equation*}
    \innerprod{\varphi_i}{\widetilde{\varphi}_j}_\mu
        = \innerprod{\varphi_i}{\sum_{l=1}^k c_{jl} \, \varphi_l}_\mu
        = \sum_{l=1}^k c_{jl}^* \innerprod{\varphi_i}{\varphi_l}_\mu
        = \sum_{l=1}^k b_{il} \, c_{jl}^*
        = \sum_{l=1}^k b_{il} \, c_{l j}.
\end{equation*}
It follows that the coefficients $ c_{ij} $ have to be chosen such that $ C = B^{-1} $. In order to obtain the matrix $ B $, we compute
\begin{equation*}
    \begin{split}
        b_{ij} &= \innerprod{\varphi_i}{\varphi_j}_\mu \approx \innerprod{\xi_i \, \Psi}{\xi_j \, \Psi}_\mu
               = \frac{1}{m} \sum_{l=1}^m \left(\xi_i \, \Psi(x_l) \right) \left(\xi_j \, \Psi(x_l) \right)^* \\
               &= \frac{1}{m} \left(\xi_i \, \Psi_X\right) \left(\xi_j \, \Psi_X \right)^*
               = \frac{1}{m} \, \xi_i \, G \, \xi_j^*,
    \end{split}
\end{equation*}
where again $ G = \Psi_X \Psi_X^T $. That is,
\begin{equation*}
    B = \frac{1}{m} \Xi \, G \, \Xi^* \quad \Rightarrow \quad
    C = B^{-1} = m \, (\Xi^*)^{-1} \, G^{-1} \, \Xi^{-1}.
\end{equation*}
Here, we assume that the matrix $ G $ is invertible. It follows that
\begin{equation*}
    \widetilde{\varphi}_j = \sum_{l=1}^k c_{jl} \, \xi_l \, \Psi = \widetilde{\xi}_j \, \Psi,
\end{equation*}
where $ \widetilde{\Xi} = C \, \Xi = m \, (\Xi^*)^{-1} \, G^{-1} $. The drawback of this approach is that all the eigenvectors of the matrix $ M_\textsub{K} $ need to be computed, which -- for a large number of basis functions -- might be prohibitively time-consuming. We are often only interested in the leading eigenfunctions.

\subsection{EDMD for the Perron--Frobenius operator}

EDMD as presented in Section~\ref{sec:Numerical approximation} can also directly be used to compute an approximation of the eigenfunctions of the Perron--Frobenius operator. Since
\begin{equation*}
    \widetilde{a}_{ij} = \innerprod{\K \psi_i}{\psi_j}_\mu = \innerprod{\psi_i}{\mathcal{P} \psi_j}_\mu,
\end{equation*}
the entries of the matrix $ \widetilde{A}^T $ are given by $ \innerprod{\mathcal{P} \psi_i}{\psi_j}_\mu $. The matrices $ A $ and $ G $ are approximations of $ \widetilde{A} $ and $ \widetilde{G} $, respectively. Thus, the eigenfunctions of the Perron--Frobenius operator can be approximated by computing the eigenvalues and left eigenvectors of
\begin{equation}
    M_\textsub{P} = A^T \, G^+.
    \label{eq:M_P}
\end{equation}
Analogously, the generalized eigenvalue problem
\begin{equation*}
    \widetilde{\xi} \, A^T = \lambda \, \widetilde{\xi} \, G
\end{equation*}
can be solved. We discuss an even more general way of approximating the adjoint operator in Appendix~\ref{app:ad_EDMD}.

\begin{example}
Let us compute the dominating eigenfunction of the Perron--Frobe\-nius operator for the linear system introduced in Example~\ref{ex:Linear example}. Note that the origin is a fixed point and we would expect the invariant density to be the Dirac distribution $ \delta $ with center $ (0, 0) $. Using monomials of order up to 10 and thin plate splines of the form $ \psi(r) = r^2 \ln r $, where $ r $ is the distance between the point $ (x, y) $ and the center, respectively, we obtain the approximations shown in Figure~\ref{fig:Linear example P1}. This illustrates that the results strongly depend on the basis functions chosen. EDMD will return only meaningful results if the eigenfunctions can be represented by the selected basis.

\begin{figure}[htb]
    \newcommand*{\factor}{0.33}
    \centering
    \begin{minipage}[t]{\factor\textwidth}
        \centering
        \subfiguretitle{a)}
        \includegraphics[width=\textwidth]{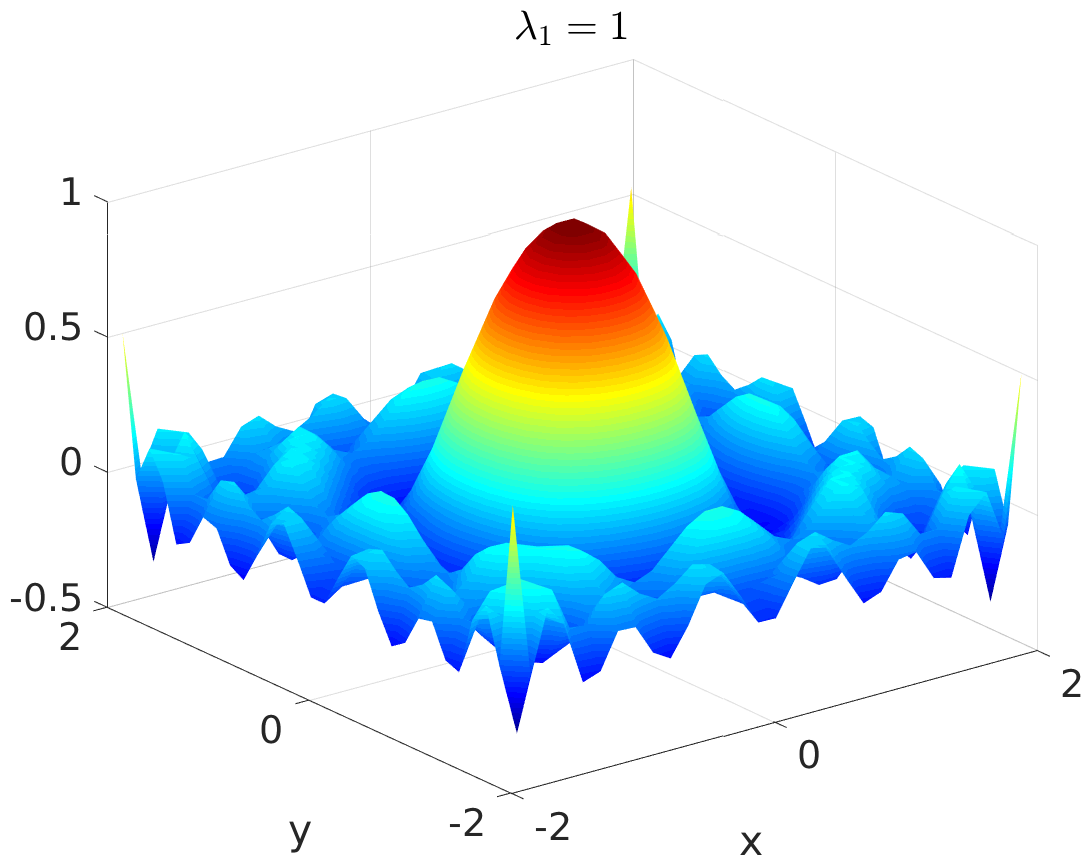}
    \end{minipage}
    \hspace*{1cm}
    \begin{minipage}[t]{\factor\textwidth}
        \centering
        \subfiguretitle{b)}
        \includegraphics[width=\textwidth]{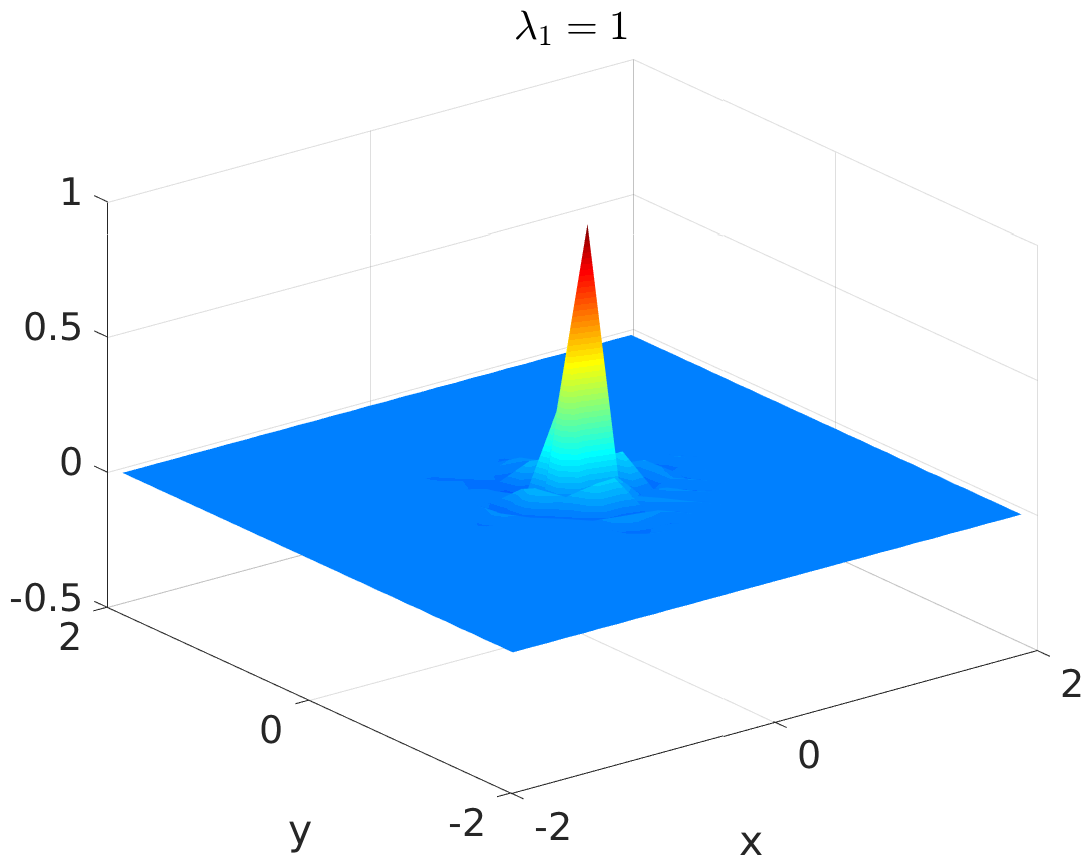}
    \end{minipage}
    \caption{Approximation of the invariant density of the linear system from Example~\ref{ex:Linear example} using a) monomials of order up to 10 and b) thin plate splines. This example shows that EDMD captures the eigenfunctions only if they can be represented by the basis chosen.}
    \label{fig:Linear example P1}
\end{figure}

One possibility to detect whether the chosen basis is sufficient to approximate the dominant eigenfunctions accurately is to add additional basis functions and to check whether the results remain essentially unchanged. Here, one should take into account that the condition number of the problem might deteriorate if a large number of basis functions is used. Another possibility is to compute the residual~$\norm{\Psi_Y - K^T \Psi_X}_F$. A large error indicates that the set of basis functions cannot represent the eigenfunctions accurately. \exampleSymbol

\end{example}

\begin{remark}
The eigenfunctions computed in the previous subsection are identical to the ones computed here. The matrix $ \Xi $ can be computed as the solution of the following generalized eigenvalue problem $ \Xi \, A = \Lambda \, \Xi \, G $. Hence, we get $ A^T = G^T \, \Xi^* \, \Lambda^* \, (\Xi^*)^{-1} $. Then $ \widetilde{\Xi} = (\Xi^*)^{-1} \, G^{-1} $, neglecting the factor $ m $, solves the generalized eigenvalue problem
\begin{equation*}
    \widetilde{\Xi} \, A^T
        = (\Xi^*)^{-1} \, G^{-1} \, G^T \, \Xi^* \, \Lambda^* \, (\Xi^*)^{-1}
        = \Lambda^* \, (\Xi^*)^{-1}
        = \Lambda^* \, \widetilde{\Xi} \, G,
\end{equation*}
using the fact that $ G $ is symmetric.
\end{remark}

This shows that the eigenfunctions of the Koopman operator are approximated by the left eigenvectors and the eigenfunctions of the Perron--Frobenius operator by the right eigenvectors of the generalized eigenvalue problem with the matrix pencil given by $ (A, G) $. The advantage of this approach is that arbitrary basis functions can be chosen to compute eigenfunctions of the Perron--Frobenius operator. This might be beneficial if the eigenfunctions can be approximated by a small number of smooth functions -- for instance monomials, Hermite polynomials, or radial basis functions -- whereas using Ulam's method a large number of indicator functions would be required.

\section{Numerical examples}
\label{sec:Examples}

In this section, we will illustrate the different methods described in the paper using simple stochastic differential equations and molecular dynamics examples.

\subsection{Double-well problem}

Consider the following stochastic differential equation
\begin{equation} \label{eq:Double well}
    \begin{split}
        d\bm x_t &= -\grad_x V(\bm x_t, \bm y_t) \, dt + \sigma \, d\bm w_{t,1}, \\
        d\bm y_t &= -\grad_y V(\bm x_t, \bm y_t) \, dt + \sigma \, d\bm w_{t,2},
    \end{split}
\end{equation}
where $\bm w_{t,1}$ and $ \bm w_{t,2} $ are two independent standard Wiener processes. In this example, the potential, shown in Figure~\ref{fig:Double well}a, is given by $ V(x, y) = (x^2 - 1)^2 + y^2 $ and $ \sigma = 0.7 $.

\begin{figure}[htb]
    \centering
    \begin{minipage}{0.45\textwidth}
        \centering
        \subfiguretitle{a)}
        \includegraphics[width=0.95\textwidth]{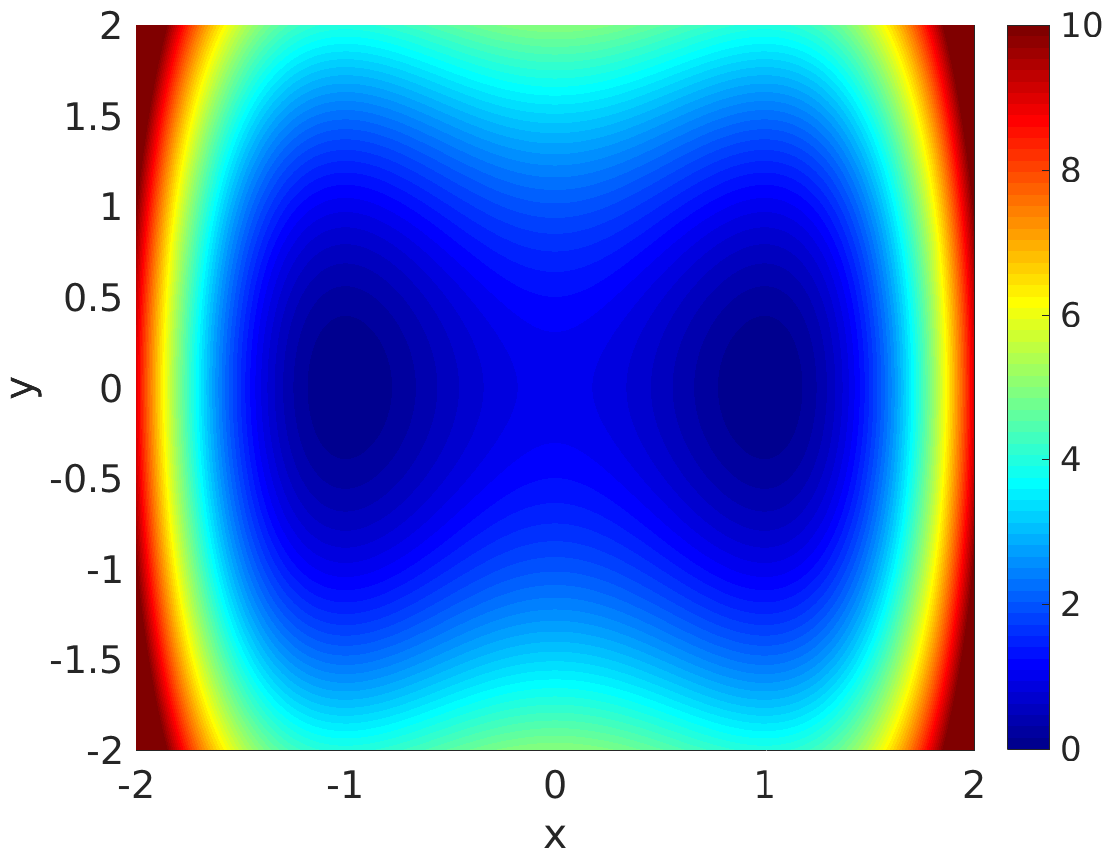}
    \end{minipage}
    \begin{minipage}{0.45\textwidth}
        \centering
        \subfiguretitle{b)}
        \includegraphics[width=0.9\textwidth]{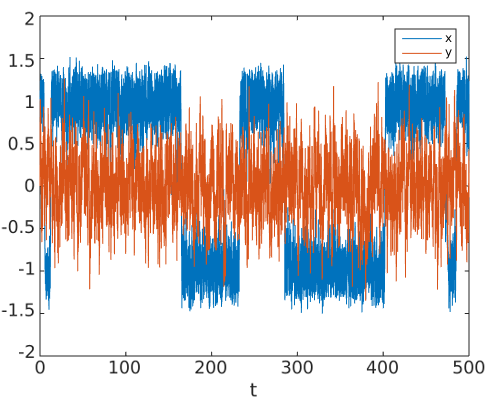}
    \end{minipage}
    \caption{a) Double-well potential $ V(x, y) = (x^2 - 1)^2 + y^2 $. The $ x $ variable typically stays for a long time close to $ x = -1 $ or $ x = 1 $ and rarely switches from one state to the other. The $ y $ variable oscillates around the equilibrium $ y = 0 $. b) Numerical solution of the double-well SDE \eqref{eq:Double well}.}
    \label{fig:Double well}
\end{figure}

Numerically, this system can be solved using the Euler--Maruyama method, which, for an SDE of the form
\begin{equation*}
    d\bm x_t = \mu(t, \bm x_t) \, dt + \sigma(t, \bm x_t) \, d\bm w_t,
\end{equation*}
can be written as
\begin{equation*}
    \bm x_{k+1} = \bm x_k + h \, \mu(t_k, \bm x_k) + \sigma(t_k, \bm x_k) \, \Delta\bm w_k,
\end{equation*}
where $ h $ is the step size and $ \Delta\bm w_k = \bm w_{k+1} - \bm w_k \sim \mathcal{N}(0, h) $. Here, $ \mathcal{N}(0, h) $ denotes a normal distribution with mean~$ 0 $ and variance~$ h $. 
A typical trajectory of system~\eqref{eq:Double well} is shown in Figure~\ref{fig:Double well}b.

\begin{figure}[p]
    \newcommand*{\factor}{0.33}
    \centering
    \subfiguretitle{a)}
    \includegraphics[width=\factor\textwidth]{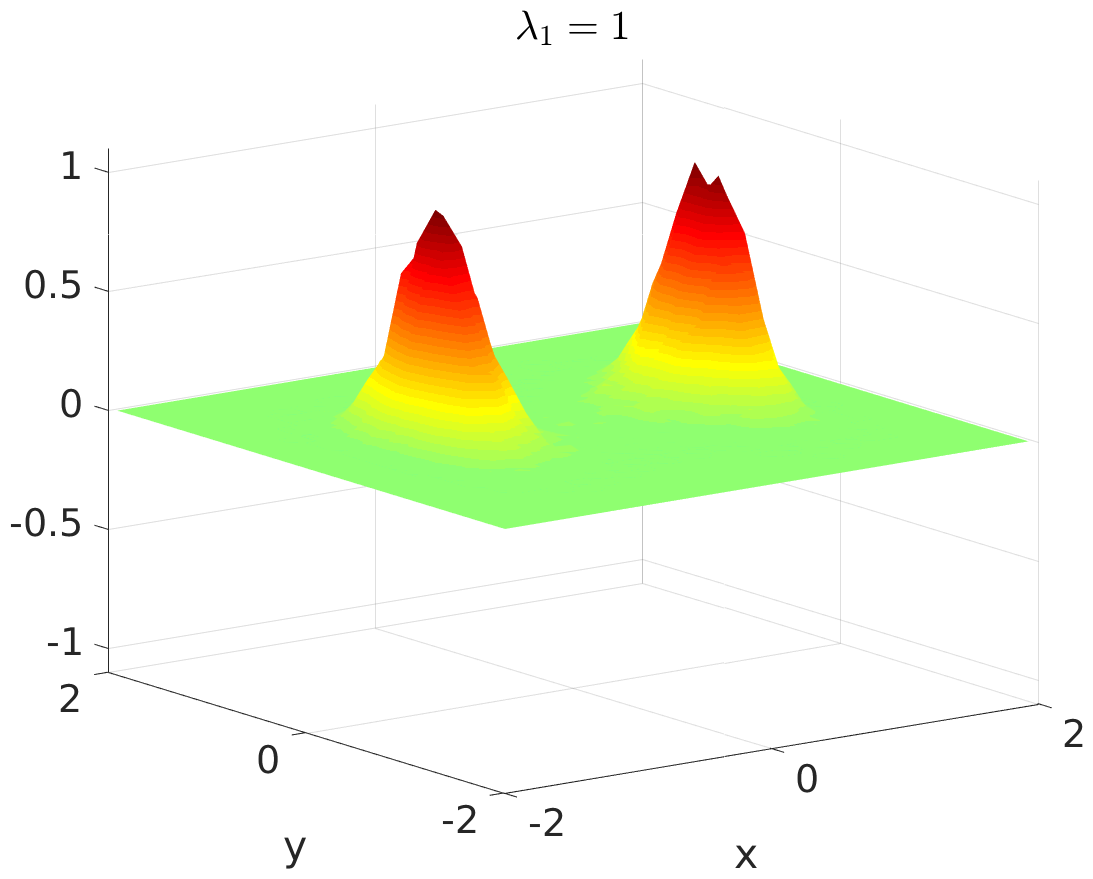} \hspace*{1cm}
    \includegraphics[width=\factor\textwidth]{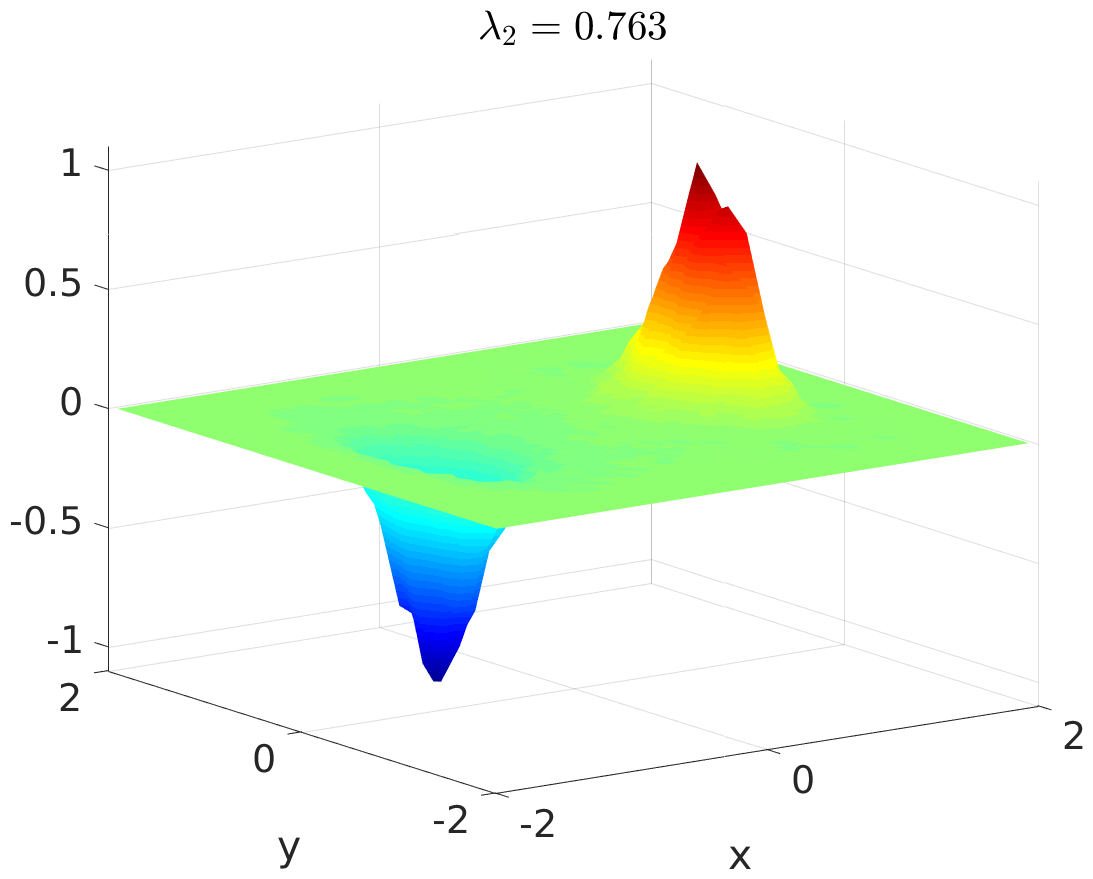} \\
    \subfiguretitle{b)}
    \includegraphics[width=\factor\textwidth]{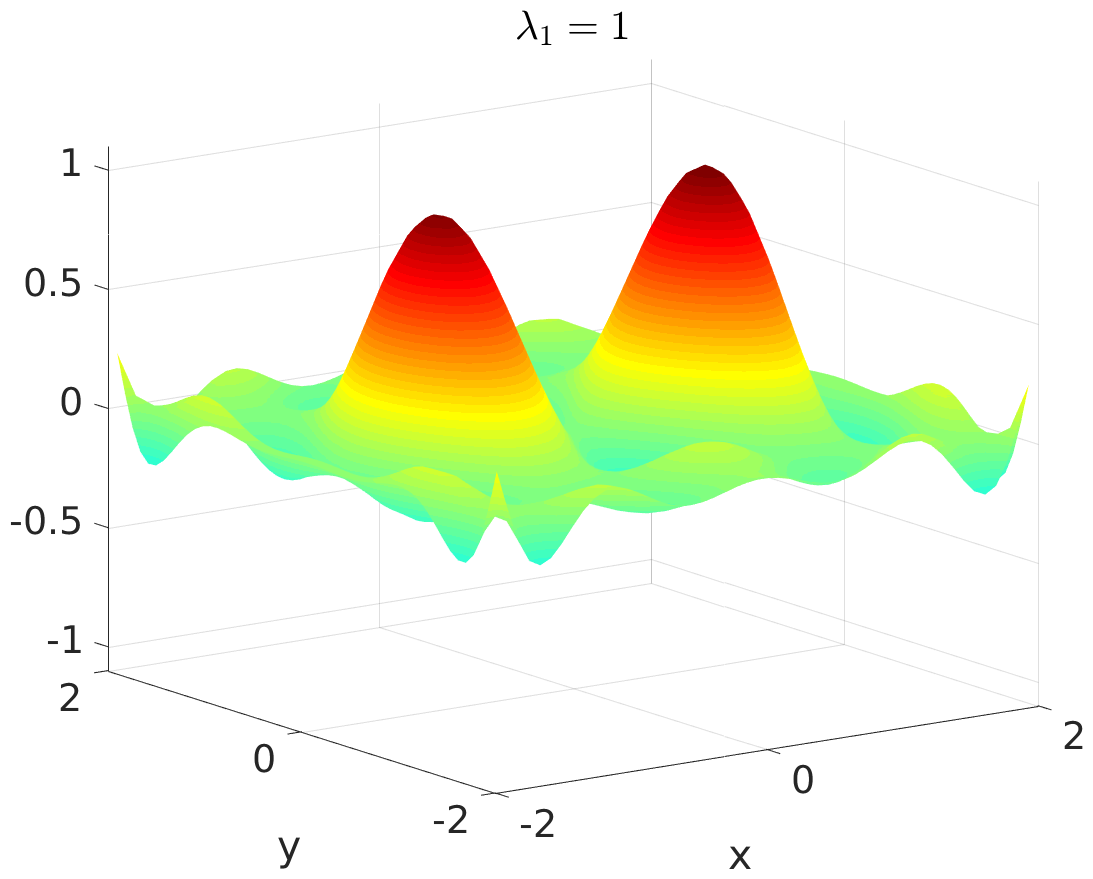} \hspace*{1cm}
    \includegraphics[width=\factor\textwidth]{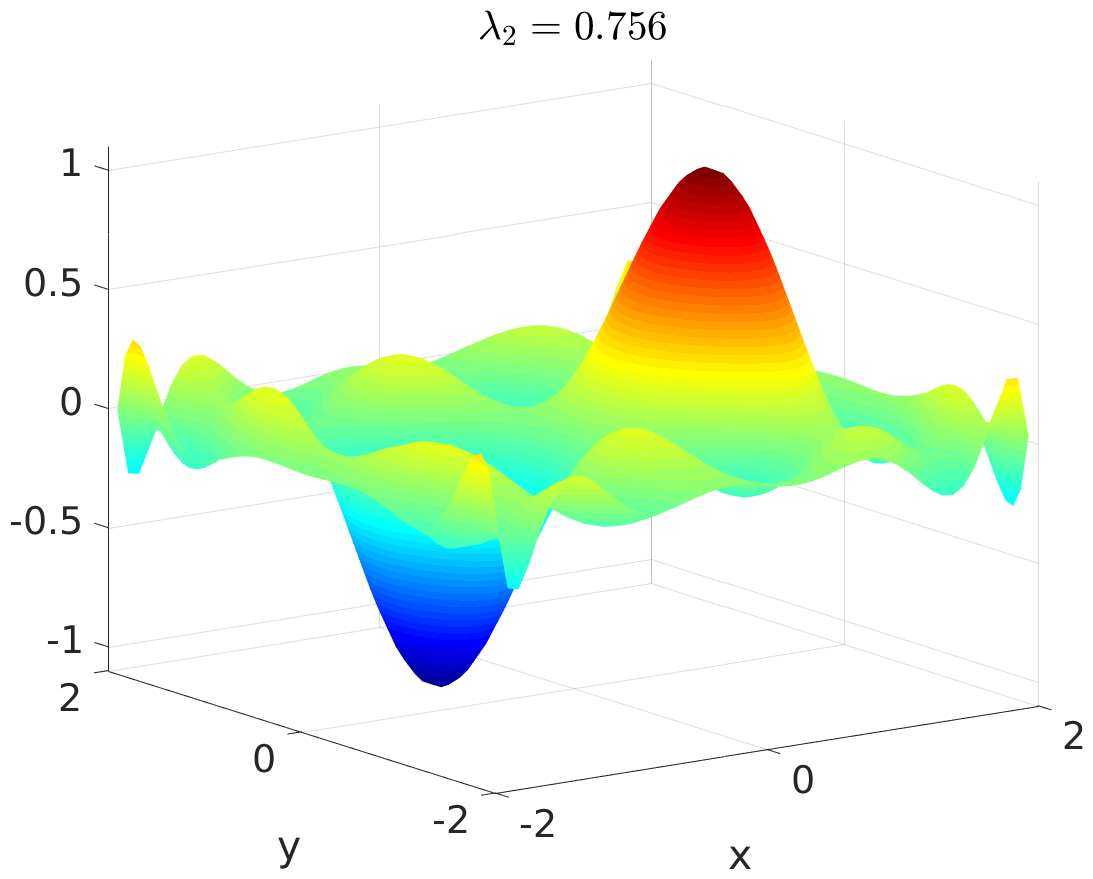} \\
    \subfiguretitle{c)}
    \includegraphics[width=\factor\textwidth]{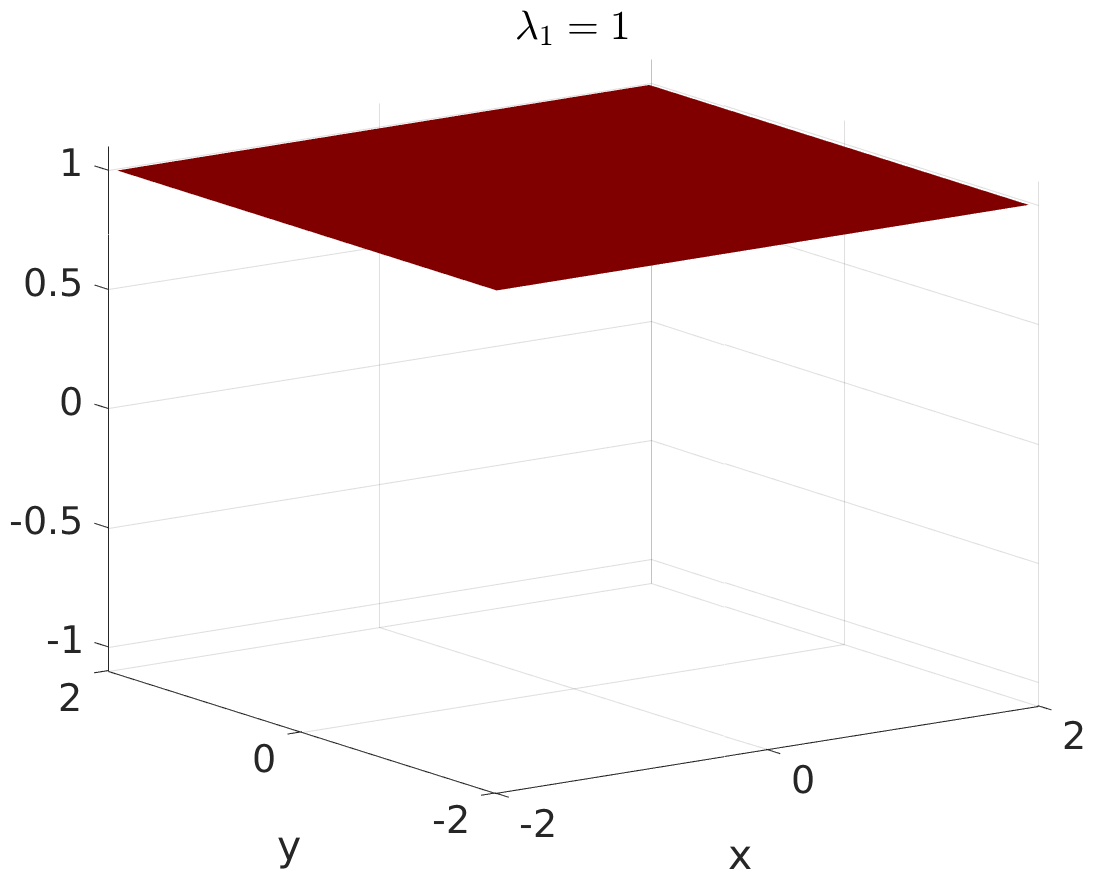} \hspace*{1cm}
    \includegraphics[width=\factor\textwidth]{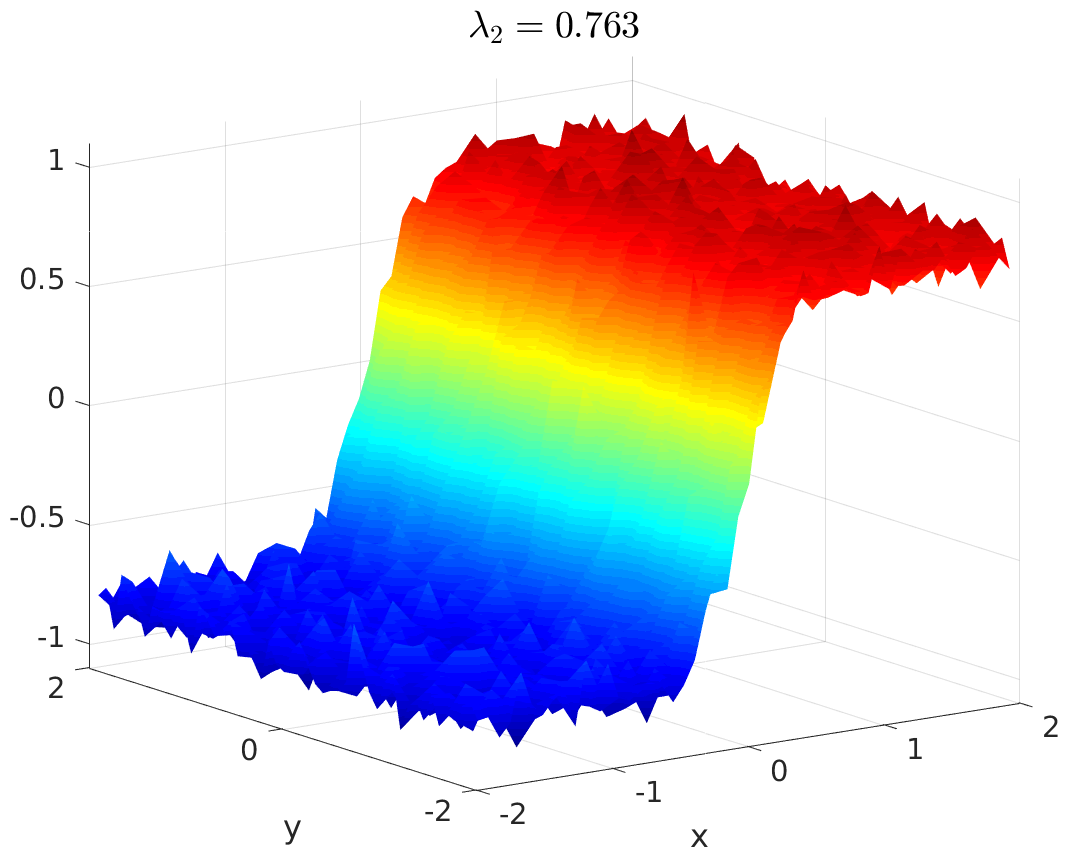} \\
    \subfiguretitle{d)}
    \includegraphics[width=\factor\textwidth]{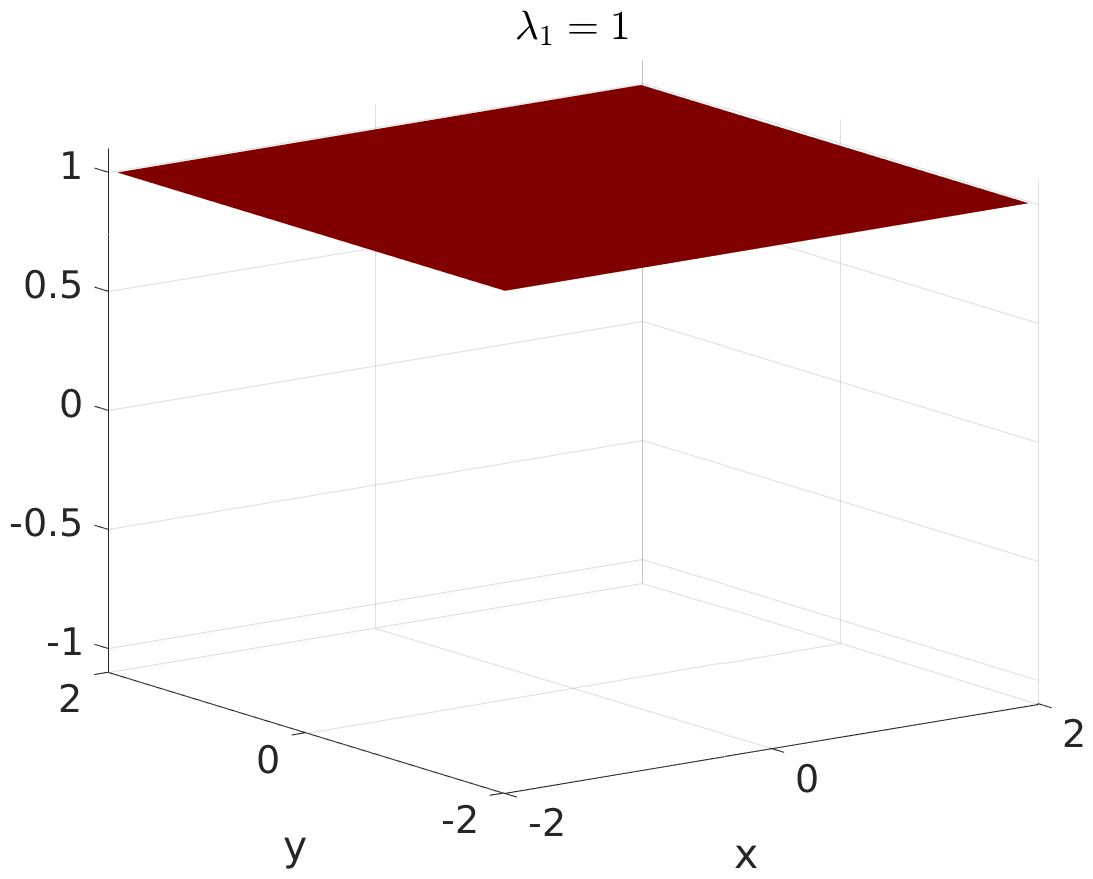} \hspace*{1cm}
    \includegraphics[width=\factor\textwidth]{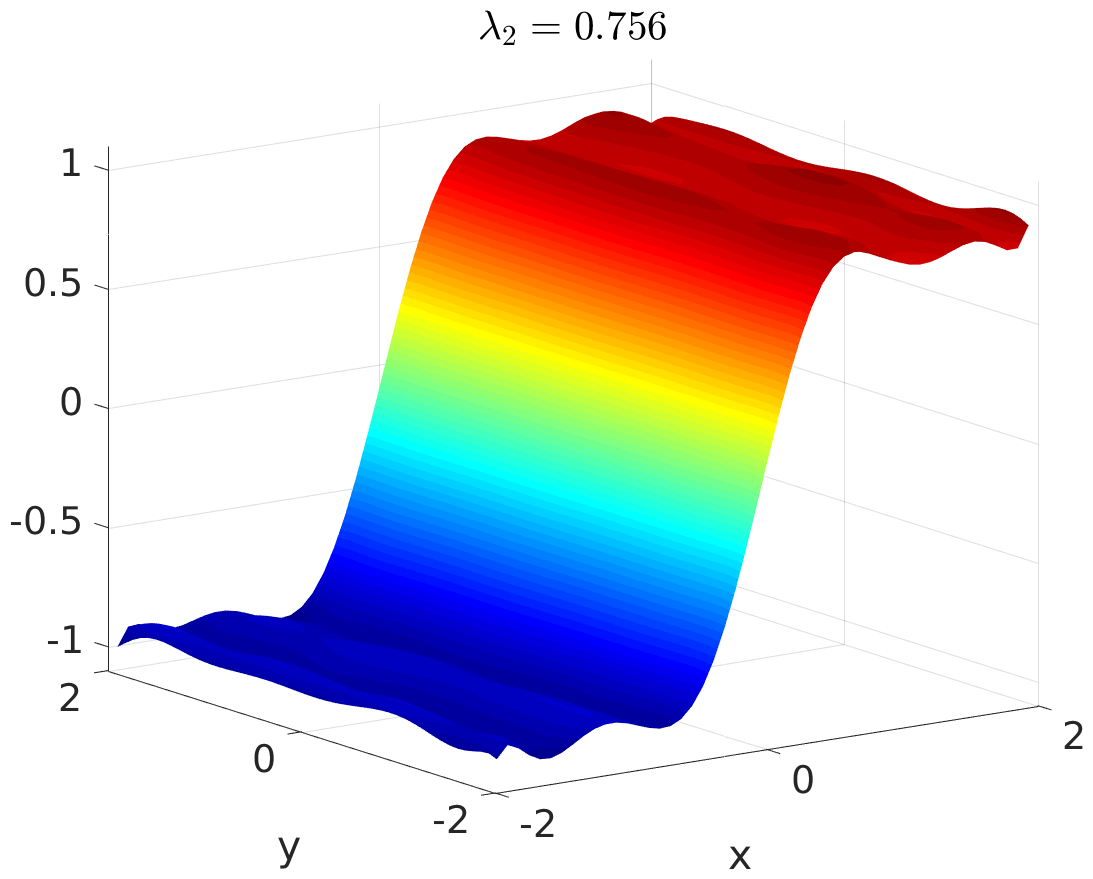}
    \caption{The first two eigenfunctions of the Perron--Frobenius operator $ \mathcal{P} $ for the double-well problem computed using a) Ulam's method and b) EDMD and the first two eigenfunctions of the Koopman operator $ \K $ computed using c) Ulam's method and d) EDMD.}
    \label{fig:Double well PK12}
\end{figure}

In order to compare the different methods described in the preceding sections, we conputed the leading eigenfunctions with Ulam's method and EDMD. For Ulam's method, we partitioned the domain $ \Omega = [-2, 2]^2 $ into $ 50 \times 50 $ boxes of the same size. For EDMD, we used monomials of order up to and including 10, i.e.
\begin{equation*}
    \mathbb{D} = \{1, \, x, \, y, \, x^2, \, x\,y, \, y^2, \, \dots, \, x^2\,y^8, \, x\,y^9, \, y^{10} \}.
\end{equation*}
That is, Ulam's method requires 2500 parameters to describe the eigenfunctions while EDMD requires only 66. For each box, we generated $ n = 100 $ test points, i.e.~$ 250000 $ test points overall, and used the same test points also for EDMD resulting in $ \Psi_X, \Psi_Y \in \R^{66 \times 250000} $. The system \eqref{eq:Double well} is solved using the Euler--Maruyama method with a step size of $ h = 10^{-3} $. One evaluation of the corresponding dynamical system~$ \bm\Phi $ corresponds to~$ 10000 $ steps. That is, each initial condition is integrated from~$ t_0 = 0 $ to $ t_1 = 10 $. The first two eigenfunctions of the Perron--Frobenius operator and Koopman operator are shown in Figure~\ref{fig:Double well PK12}. Observe that the computed eigenvalues are -- as expected -- almost identical. The second eigenfunction computed with Ulam's method is still very coarse, increasing the number of test points per box would smoothen the approximation. Since for EDMD only smooth basis functions were chosen, the resulting eigenfunction is automatically smoothened.

The system has two metastable states and the second eigenfunction of the Perron--Frobenius operator can be used to detect these metastable states. Also the second eigenfunction of the adjoint Koopman operator contains information about a possible partitioning of the state space, it is almost constant in the $ y $-direction and also almost constant in the $ x $-direction except for an abrupt transition from $ -1 $ to $ 1 $ between the two metastable sets. The other eigenvalues of the system are numerically zero.

\subsection{Triple-well problem}

Consider the slightly more complex triple-well potential
\begin{equation} \label{eq:Triple well potential}
    V(x, y) = 3 \, e^{-x^2-(y - \frac{1}{3})^2} - 3 \, e^{-x^2-(y - \frac{5}{3})^2}
            - 5 \, e^{-(x-1)^2-y^2}             - 5 \, e^{-(x+1)^2-y^2}
            + \tfrac{2}{10} \, x^4              + \tfrac{2}{10} \left(y-\tfrac{1}{3}\right)^4 
\end{equation}
taken from~\cite{SS13}. Here, the variables $ x $ and $ y $ are coupled, i.e.~the potential cannot be written as $ V(x, y) = V_1(x) + V_2(y) $ anymore.
\begin{figure}[htb]
    \centering
    \includegraphics[width=0.4\textwidth]{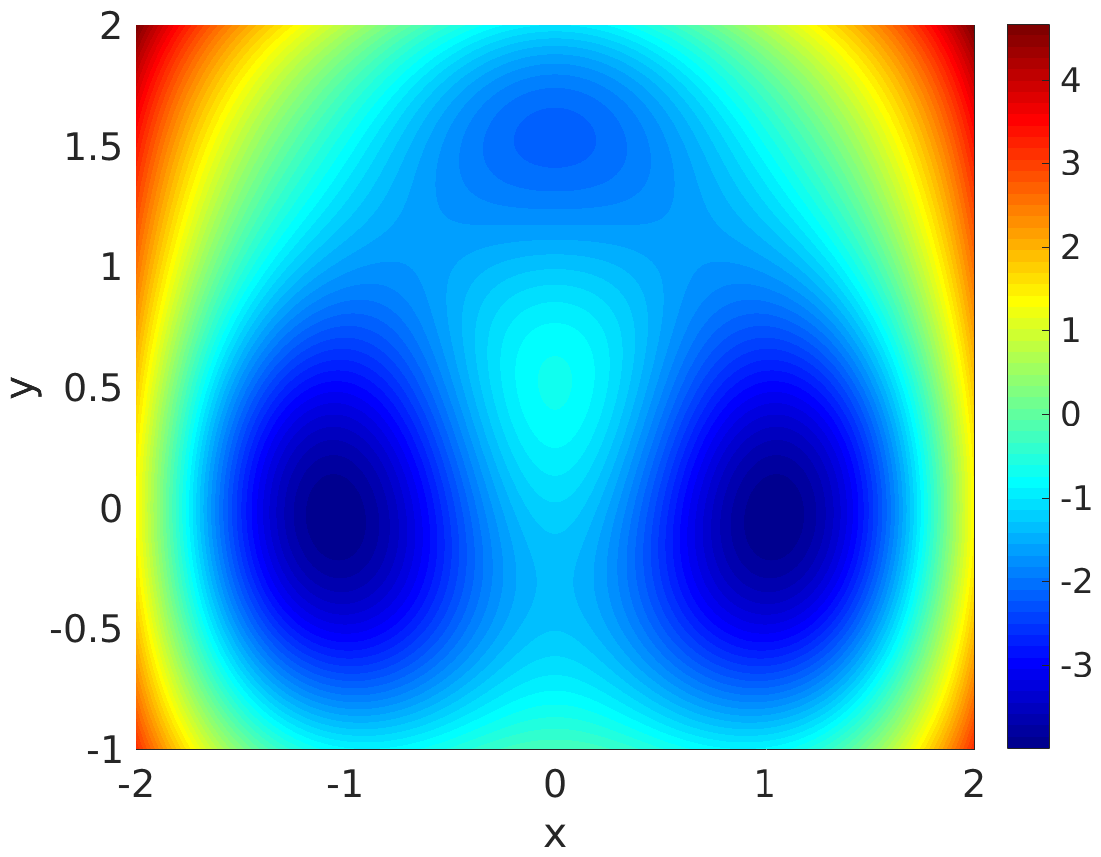}
    \caption{Triple-well potential given by \eqref{eq:Triple well potential}.}
    \label{fig:Triple well potential}
\end{figure}
The potential function is shown in Figure~\ref{fig:Triple well potential} and the first two nontrivial eigenfunctions of the Perron--Frobenius operator and the Koopman operator in Figure~\ref{fig:Triple well PK23}. Note that the eigenfunction $ \varphi_2 $ separates the two deep wells at $ (-1, 0) $ and $ (1, 0) $ and is near zero for the well at $ (0, 1.5) $, the third eigenfunction $ \varphi_3 $ separates the two deep wells from the shallow well. Here, the eigenfunctions of the Perron--Frobenius operator and the eigenfunctions of the Koopman operator essentially encode the same information. As before, we used EDMD with monomials of order up to and including $ 10 $ and $ 250000 $ randomly generated test points within the domain $ \Omega = [-2, 2] \times [-1, 2] $. Each test point was integrated from $ t_0 = 0 $ to $ t_1 = 0.1 $ using a step size of $ h = 10^{-5} $. The parameter $ \sigma $ was set to $ 1.09 $.

\begin{figure}[htb]
    \newcommand*{\factor}{0.33}
    \centering
    \subfiguretitle{a)}
    \includegraphics[width=\factor\textwidth]{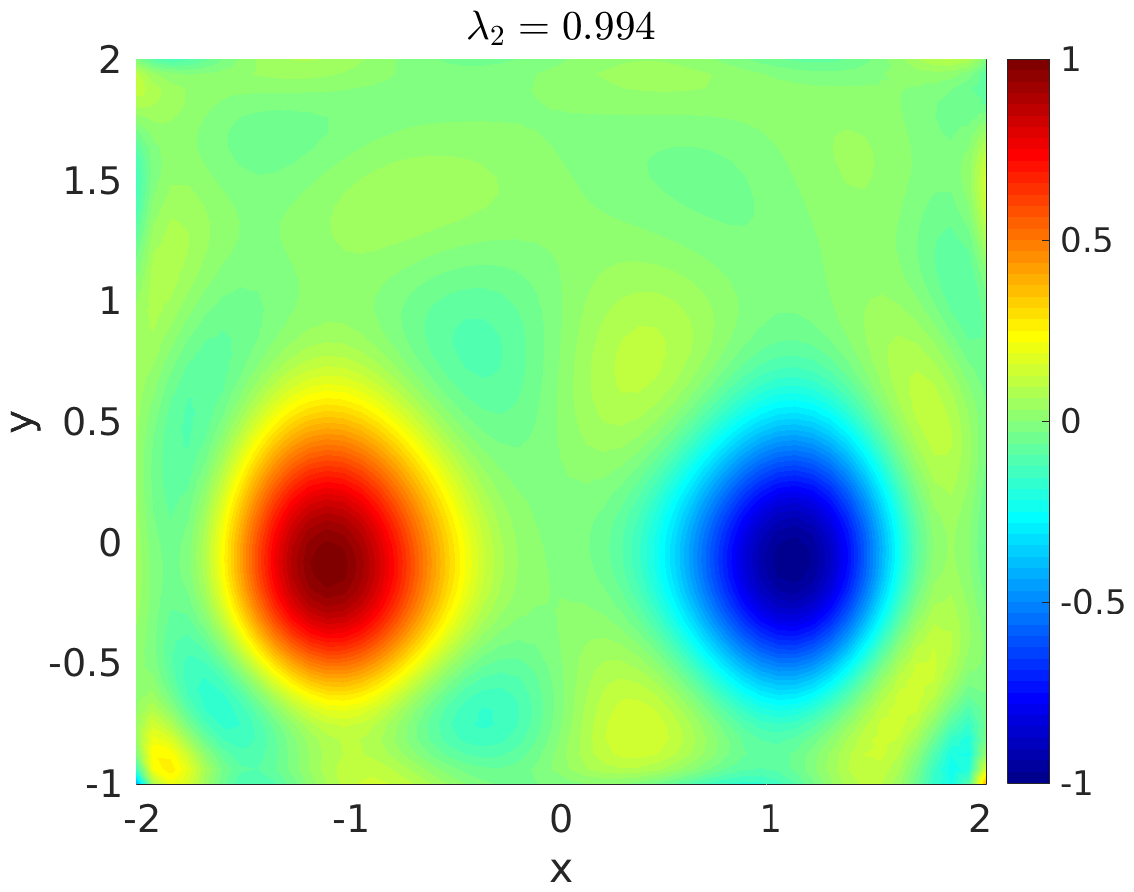} \hspace*{1cm}
    \includegraphics[width=\factor\textwidth]{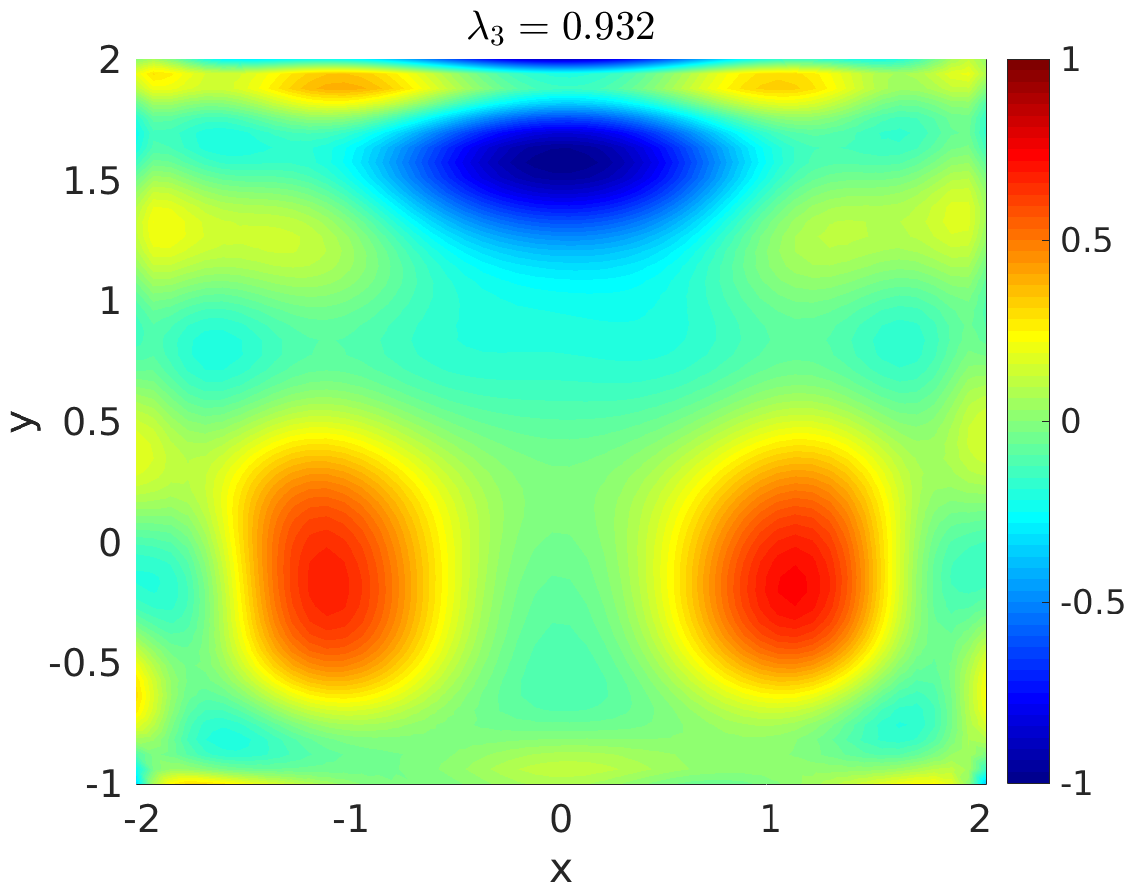} \\
    \subfiguretitle{b)}
    \includegraphics[width=\factor\textwidth]{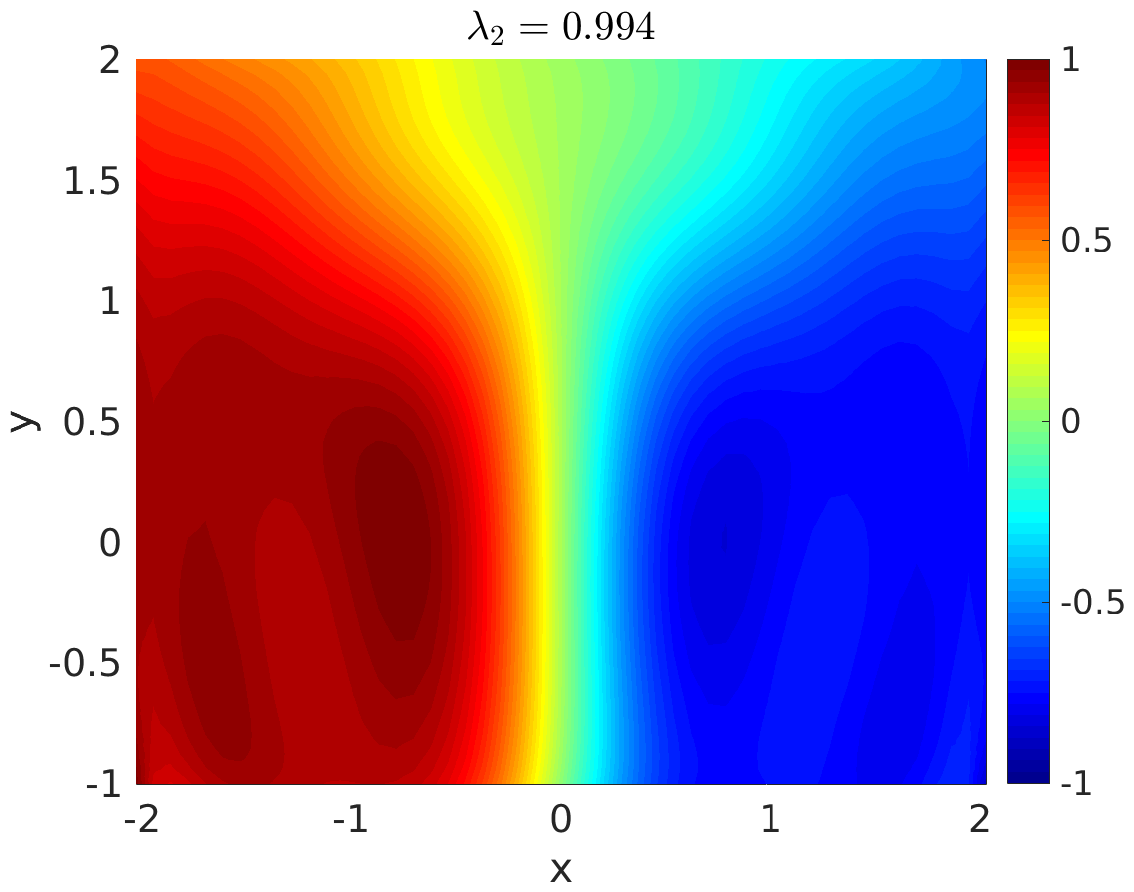} \hspace*{1cm}
    \includegraphics[width=\factor\textwidth]{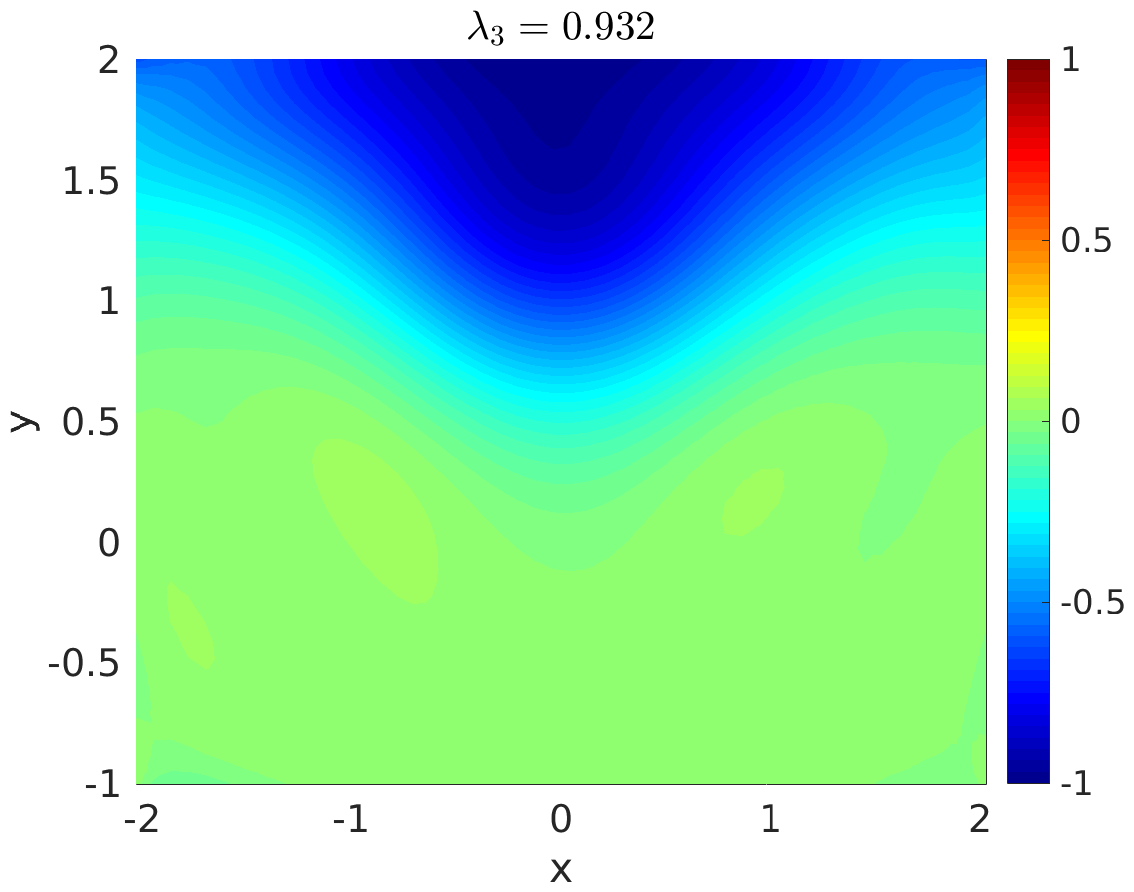}    
    \caption{Second and third eigenfunction of a) the Perron--Frobenius operator $ \mathcal{P} $ and b) the Koopman operator $ \K $ for the triple-well problem.}
    \label{fig:Triple well PK23}
\end{figure}

\subsection{Molecular dynamics and conformation analysis}

\paragraph{Classical molecular dynamics.}

Classical molecular dynamics describes the motion of atoms, or groups of atoms, in terms of Hamiltonian dynamics under the influence of atomic interaction forces resulting from a potential. The \emph{position} or \emph{configuration} space~$\mathbb{Q}\subset\R^d$ describes all possible positions of the atoms, while the momentum space~$\mathbb{P}=\R^d$ contains all momenta. The potential~$V:\mathbb{Q}\to\R$ is assumed to be a sufficiently smooth function. The \emph{phase} space~$\mathbb{X} = \mathbb{Q}\times\mathbb{P}$ of the molecule consists of all possible position-momenta pairs~$x=(q,p)$. The evolution of a molecule in phase space under ideal conditions is described by Hamilton's equations of motion
\begin{equation*} 
    \begin{aligned}
        \dot{q}_t &= M^{-1}p_t, 
    \end{aligned}
\end{equation*}  \begin{equation} \label{eq:Hamilton}
    \begin{aligned}      \dot{p}_t &= -\nabla V(q_t),
    \end{aligned}
\end{equation}
where~$M$ denotes the symmetric positive definite mass matrix. Since molecules do not stand alone, but are rather subject to interaction with their surrounding molecules, different models incorporating these interactions are more commonly used. One way to account for the collisions with the surrounding molecules is to include a damping and a stochastic forcing term in~\eqref{eq:Hamilton} to obtain the \emph{Langevin equation}
\begin{equation} \label{eq:Langevin}
    \begin{aligned}
        d\bm q_t &= M^{-1}\bm p_t dt, \\
        d\bm p_t &= -\nabla V(\bm q_t)dt - \gamma M^{-1}\bm p_t dt+ \sigma d\bm w_t.
    \end{aligned}
\end{equation}
This is an SDE giving rise to a non-deterministic evolution, hence positions and momenta are random variables. Here,~$\bm w_t$ is a standard Wiener process in~$\R^d$. Further,~$\gamma$ and~$\sigma$ satisfy the fluctuation-dissipation relation~$2\gamma = \beta\sigma\sigma^T$, where~$0<\beta$ is called the inverse temperature. This is due to the fact that~$\beta = (k_{\rm B}T)^{-1}$, where~$T$ is the macroscopic temperature of the system, and~$k_{\rm B}$ is the Boltzmann constant. The fluctuation-dissipation relation ensures that the energy of the system is conserved in expectation.

It can also be shown (cf.~\cite{MaSt02,SS13}) that the Langevin process, governed by~\eqref{eq:Langevin}, has a unique invariant density with respect to which it is ergodic. This density is also called the \emph{canonical} density, and has the explicit form~$f_{\rm can}(q,p) = Z^{-1} \exp\left(- \beta (\tfrac12 p^T Mp + V(q))\right)$, where~$Z$ is a normalizing constant.

\paragraph{Spatial transfer operator.}

One of the main features of molecules we are interested in is that it has several important geometric forms, called \emph{conformations}, between which it ``switches''. Hereby it spends ``long'' times (measured on the time scales of its internal motion) in one conformation, and passes quickly to another. Due to this time scale separation the conformations are called \emph{metastable}. The identification of metastable conformations is of major interest, and it is connected to the sub-dominant eigenfunctions of a special transfer operator which is adapted to the problem at hand~\cite{Sch99}: although the more appreciated models describe the dynamics of a molecule in the complete phase space including positions and momenta, metastability is observed (and described) in the positional coordinate only.

This problem-adapted transfer operator is called the \emph{spatial transfer operator} (cf.~\eqref{eq:spatial} below), and describes positional density fluctuations in a molecule which is in thermal equilibrium. More precisely, if an ensemble of molecules with positional coordinates distributed according to the density~$w:\mathbb{Q}\to\R$ with respect to the canonical distribution is given, then its image under the spatial transfer operator with lag time~$t$ describes the density of the positional coordinate of the ensemble after time~$t$, again with respect to the canonical distribution:
\begin{equation} \label{eq:spatial}
    \Sp^t w(q) = \frac{1}{f_{\mathbb{Q}}(q)}\int_{\mathbb{P}} \left(\P^t_{\rm Lan} wf_{\rm can}\right)(q,p)\,dp,
\end{equation}
where~$f_{\mathbb{Q}}$ is the positional marginal of the canonical density, i.e.
\begin{equation*}
    f_{\mathbb{Q}}(q) = \int_{\mathbb{P}}f_{\rm can}(q,p)\,dp,
\end{equation*}
and~$\P^t_{\rm Lan}$ is the transfer operator of the Langevin process governed by~\eqref{eq:Langevin}.

The operator~$\Sp^t:L^2(\mathbb{Q},\mu_{\mathbb{Q}})\to L^2(\mathbb{Q},\mu_{\mathbb{Q}})$, where~$d\mu_{\mathbb{Q}}(q) = f_{\mathbb{Q}}(q)dq$, is self-adjoint (i.e.~has pure real point spectrum), and due to the ergodicity of the Langevin process it possesses the isolated and simple eigenvalue~1 with corresponding eigenfunction~$\mathds{1}_{\mathbb{Q}}$~\cite{BiKoJu15}.

With the right chemical intuition at hand the range of positional coordinates possibly interesting for conformation analysis can be drastically reduced to just a handful of \emph{essential coordinates}; as it is shown in Section~\ref{ssec:butane}. The spatial transfer operator can be adapted to this situation, as we describe in Appendix~\ref{app:EDMD spatial essential}. There we also show that if we carry out the EDMD procedure in the space of these reduced observables, we actually approximate a Galerkin projection of the corresponding \emph{reduced} spatial transfer operator. A similar technique has been developed in~\cite{NoNu13,NKPMN14}. Chekroun et al~\cite{Chekroun2014} also approximate a reduced transfer operator from observable time series from climate models, but only for the case where the basis functions are characteristic functions, as in Ulam's method.

\subsection{\texorpdfstring{$n$}{n}-butane} \label{ssec:butane}

Let us now consider the $ n $-butane molecule H$_3$C$-$CH$_2-$CH$_2-$CH$_3$ shown in Figure~\ref{fig:Butane} (drawn with PyMOL~\cite{PyMOL10}). We want to analyze this molecule since the energy landscape and conformations are well-known. 
The four configurations illustrated in Figure~\ref{fig:Butane} can be obtained by rotating around the bond between the second and third carbon atom. The potential energy of a molecule depends on the structure. The higher the potential energy of a conformation, the lower the probability the system will remain in that state. Thus, we would expect a high probability for the anti configuration, a slightly lower probability for the gauche configuration, and low probabilities for the other configurations. Indeed, the anti and gauche configurations are metastable conformations.

\begin{figure}[htb]
    \centering
    \begin{minipage}{0.24\textwidth}
        \centering
        \subfiguretitle{a) $ \varphi = 0\degree $}
        \includegraphics[width=\textwidth]{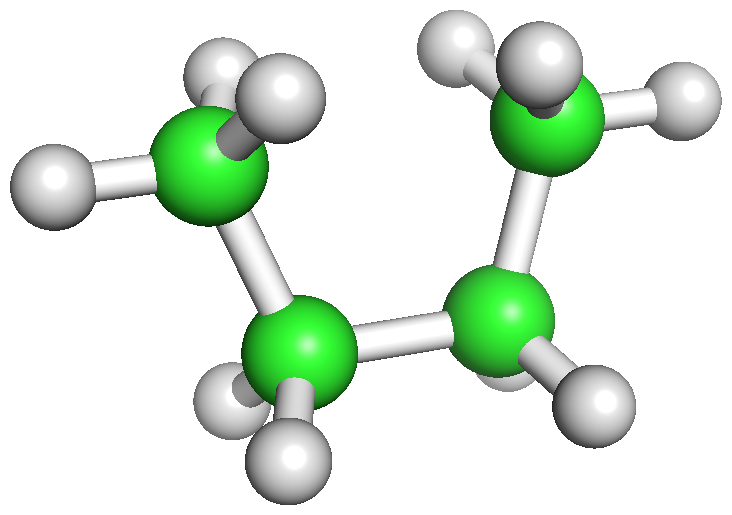} \\
        \subfiguretitle{b) $ \varphi = 60\degree $}
        \includegraphics[width=\textwidth]{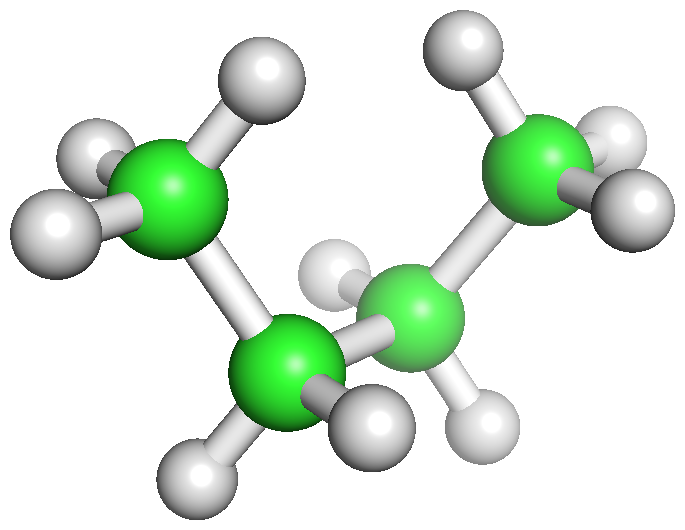}
    \end{minipage}
    \begin{minipage}{0.5\textwidth}
        \centering
        \includegraphics[width=\textwidth]{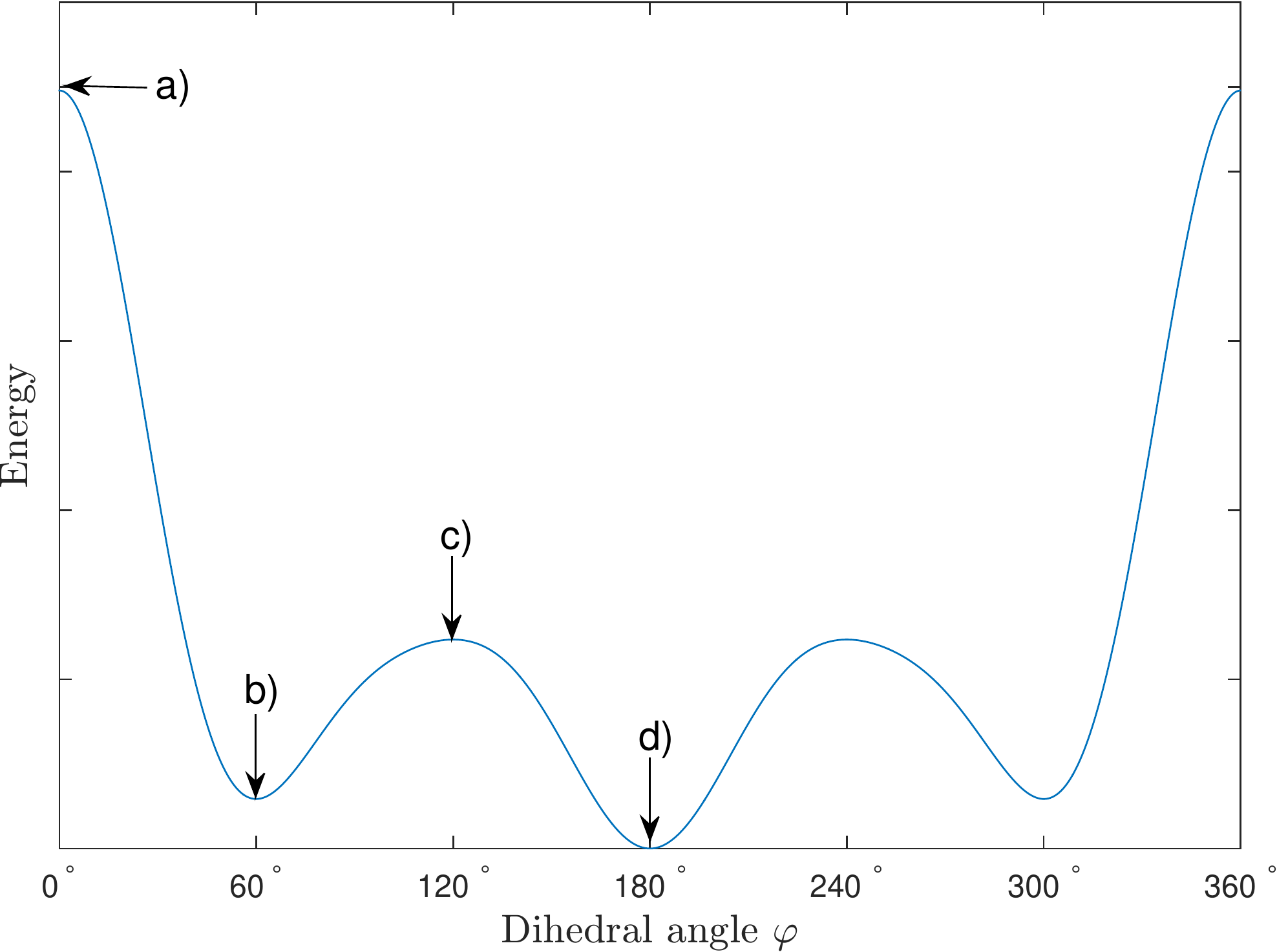}
    \end{minipage}
    \begin{minipage}{0.24\textwidth}
        \centering
        \subfiguretitle{c) $ \varphi = 120\degree $}
        \includegraphics[width=\textwidth]{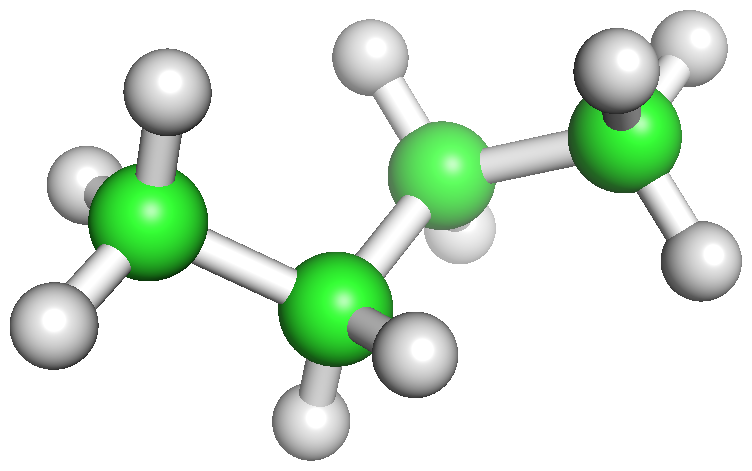} \\
        \subfiguretitle{d) $ \varphi = 180\degree $}
        \includegraphics[width=\textwidth]{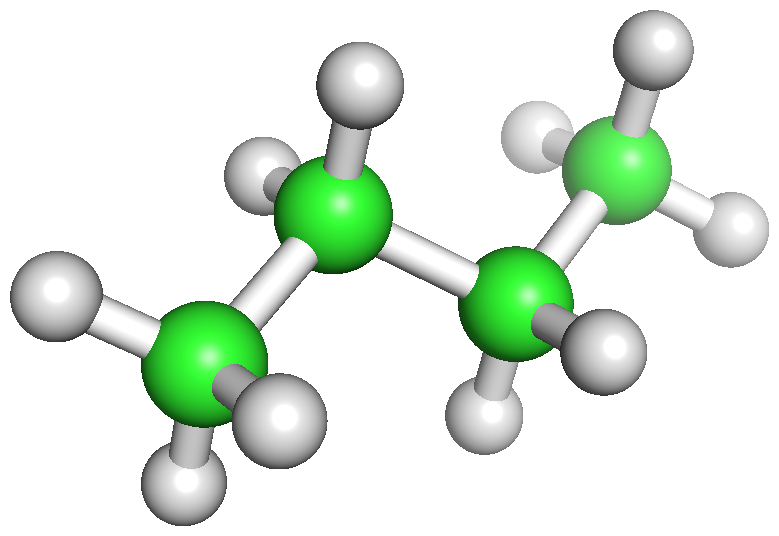}
    \end{minipage}
    \caption{Potential of the $ n $-butane molecule as a function of the dihedral angle  $ \varphi $ and different conformations. a)~Fully eclipsed. b)~Gauche. c)~Partly eclipsed. d)~Anti.}
    \label{fig:Butane}
\end{figure}

\begin{figure}[htb]
    \centering
    \includegraphics[width=0.3\textwidth]{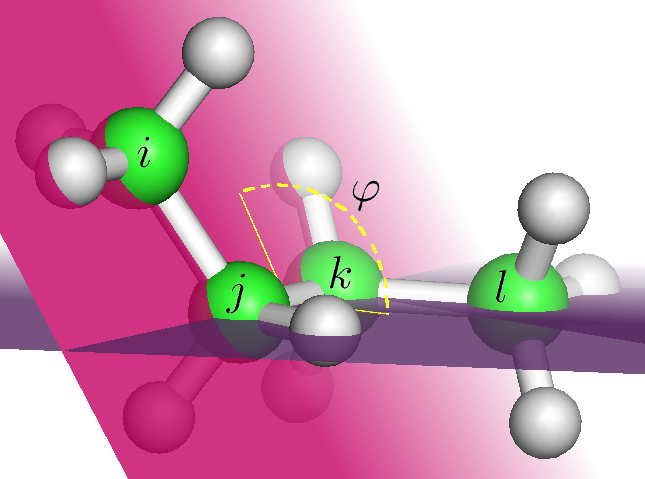}
    \caption{Definition of the dihedral angle $ \varphi $ for the butane molecule.}
    \label{fig:Dihedral angle}
\end{figure}

Molecular dynamics simulators are standard tools to analyze the conformations and conformational dynamics of biological molecules such as proteins and the extraction of this essential information from molecular dynamics simulations is still an active field of research~\cite{NSVN15}. We simulated the $ n $-butane molecule for an interval of $ 10 \unit{ns} $ with a step size of $ 2 \unit{fs} $ using AmberTools15~\cite{Amber15} and, downsampling by a factor of 100, created one trajectory containing 50,000 data points. From this $ 42 $-dimensional trajectory -- 3 coordinates for each of the 14 atoms --, we extracted the dihedral angle $ \varphi $ shown in Figure~\ref{fig:Dihedral angle} as
\begin{equation} \label{eq:dihedral}
    \cos \varphi = \frac{n_1 \cdot n_2}{\norm{n_1} \, \norm{n_2}},
\end{equation}
where $ n_1 = v_{ij} \times v_{jk} $ and $ n_2 = v_{lk} \times v_{jk} $ are the vectors perpendicular to the planes spanned by the carbon atoms $ i, j, k $ and $ j, k, l $, respectively, and $ v_{ij} $ is the bond between $ i $ and $ j $.

In order to compute the dominant eigenfunctions of the spatial transfer operator for this one essential coordinate, we used $ 41 $ basis functions  $ \{ 1, \, \cos(i \, x), \, \sin(i \, x) \} $, $ i = 1, \dots, 20 $, for the interval $ [ 0, 2 \, \pi] $. The resulting leading eigenfunctions are shown in Figure~\ref{fig:ButaneP123}. As expected, the first eigenfunction predicts high probabilites for the gauche and anti configurations and low probabilites for the other configurations. The (sign) structure of the second and third eigenfunctions contain information about the metastable sets.

\begin{figure}[htb]
    \centering
    \includegraphics[width=\textwidth]{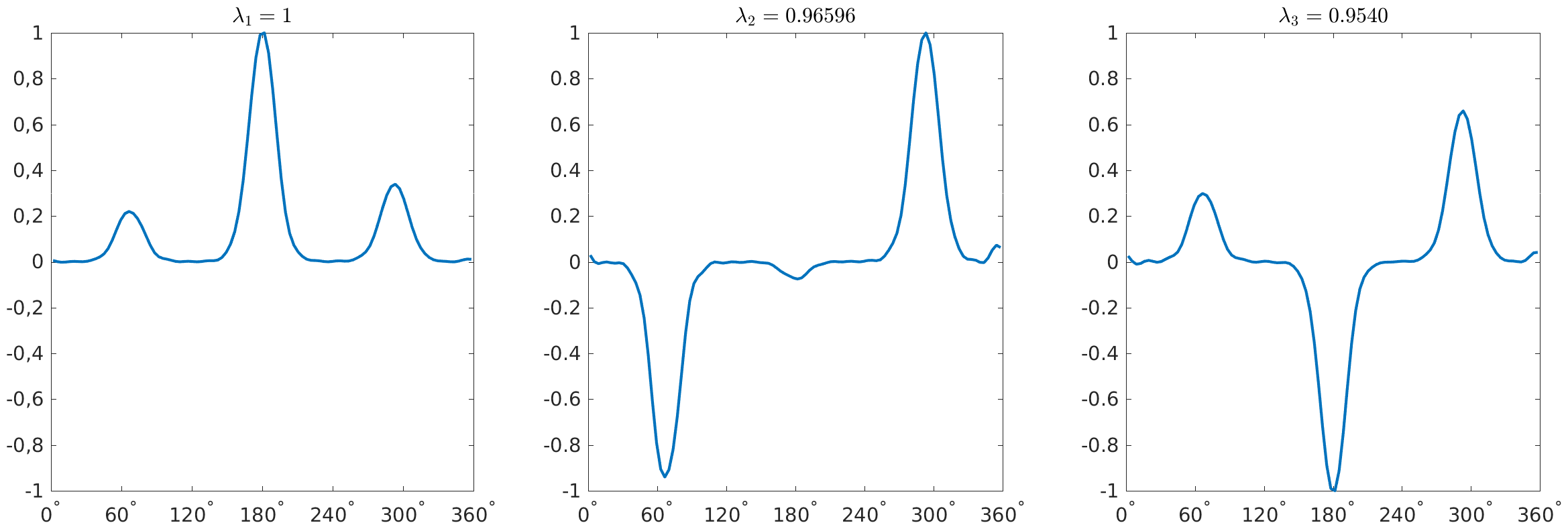}
    \caption{First three eigenfunctions of the Perron--Frobenius operator obtained from a simulation of the $ n $-butane molecule computed with EDMD.}
    \label{fig:ButaneP123}
\end{figure}

\section{Conclusion}
\label{sec:Conclusion}

The global behavior of dynamical systems can be analyzed using operator-based approaches. We reviewed and described different, projection-based numerical methods such as Ulam's method and EDMD to compute finite-dimensional approximations of the Perron--Frobenius operator and the Koopman operator. Furthermore, we highlighted the similarities and differences between these methods and showed that methods developed for the approximation of the Koopman operator can be used for the Perron--Frobenius operator, and vice versa. We demonstrated the performance of different methods with the aid of several examples. If the eigenfunctions of the Perron--Frobenius operator or Koopman operator are smooth, EDMD enables an accurate approximation with a small number of basis functions. Thus, this approach is well suited also for higher-dimensional problems.

The next step could be to investigate the possibility of extending the methods reviewed within this paper using tensors as described in~\cite{NSVN15} for reversible processes. Currently, not all numerical methods required for generalizing these methods to tensor-based methods are available. Nevertheless, developing tensor-based algorithms for these eigenvalue problems might enable the analysis of high-dimensional systems.

\section*{Acknowledgements}

This research has been partially funded by Deutsche Forschungsgemeinschaft (DFG) through grant CRC 1114 and by the Einstein Center for Mathematics Berlin (ECMath), project grant CH7.

\bibliographystyle{abbrv}
\bibliography{PFK}

\begin{appendix}

\section{Adjoint EDMD}
\label{app:ad_EDMD}

\subsection*{Notation.}

We will use the same notation as in the main text. More precisely, let us use the set of (linearly independent), piecewise continuous basis functions (dictionary)
\[
\mathbb{D} = \{\psi_1,\ldots,\psi_k\},\qquad \psi_i:\R^d\to\R,\ i=1,\ldots,k,
\]
and let us denote~$\mathbb{V} = \text{span}(\mathbb{D})$. For every~$f\in\R^k$, we define~$\bar{f} = \sum_{i=1}^kf_i\psi_i\in\mathbb{V}$. We also identify a linear operator~$A:\mathbb{V}\to\mathbb{V}$ with its matrix representation~$A\in\R^{k\times k}$ with respect to the basis~$\mathbb{D}$. Here we mean multiplication from the left, i.e.~$Kf$ is identified with~$K\bar{f}$. Further, let
\[
\Psi = \begin{pmatrix}
\psi_1\\ \vdots\\ \psi_k
\end{pmatrix}\,,
\]
a vector-valued function, and for sets of points (collected column-wise into a~$d\times m$ matrix)
\[
X = [x_1\ x_2\ \ldots\ x_m],\qquad Y = [y_1\ y_2\ \ldots\ y_m]
\]
define
\[
\Psi_X = [\Psi(x_1)\ \Psi(x_2)\ \ldots\ \Psi(x_m)],\qquad \Psi_Y = [\Psi(y_1)\ \Psi(y_2)\ \ldots\ \Psi(y_m)]\,.
\]

\subsection*{Scalar products.}

Given~$f,g\in\R^k$ and some positive measure~$\mu$, such that~$|\int \psi_i\psi_j\,d\mu|<\infty$ for all~$i,j=1,\ldots,k$, we wish to express the~$\mu$-weighted $L^2$ scalar products of elements of~$\mathbb{V}$. To this end, we compute
\[
\langle \bar{f},\bar{g}\rangle_{\mu} = \int \bar{f}\bar{g}\,d\mu = \sum_{i,j=1}^k f_ig_j \int \psi_i\psi_j\,d\mu = f^T S g\,,
\]
where~$S\in\R^{k\times k}$ with~$S_{ij} = \int \psi_i\psi_j\,d\mu$. Since~$\mu$ is positive,~$S$ is symmetric positive definite, hence invertible.

\subsection*{Adjoint operator.}

With this, we are ready to express the adjoint~$A^*$ of any (linear) operator~$A:\mathbb{V}\to\mathbb{V}$ with respect to the scalar product~$\langle \cdot,\cdot\rangle_{\mu}$.
By successive reformulations of the defining equation for the adjoint, we obtain
\[
\begin{aligned}
\langle A\bar{f},\bar{g}\rangle_{\mu} &= \langle \bar{f},A^*\bar{g}\rangle_{\mu}&\forall \bar{f},\bar{g}\in\mathbb{V}, \\
&\Updownarrow \\
\langle \sum_{i=1}^k(Af)_i\psi_i,\sum_{i=1}^k g_i\psi_i\rangle_{\mu} &= \langle \sum_{i=1}^kf_i\psi_i,\sum_{i=1}^k (A^*g)_i\psi_i\rangle_{\mu} &\forall f,g\in\R^k,\\
&\Updownarrow \\
f^TA^TSg &= f^TSA^*g &\forall f,g\in \R^k.
\end{aligned}
\]
Thus,
\begin{equation}
A^* = S^{-1}A^TS\,.
\label{eq:adj_op}
\end{equation}
\begin{remark}
From~\eqref{eq:adj_op} we can see that~$A^T$ represents the adjoint of~$A$ if~$S$ is a multiple of the identity matrix, implying that the basis functions are orthogonal with respect to~$\langle\cdot,\cdot\rangle_{\mu}$. This is the case for Ulam's method, given the boxes have all the same measures.
\end{remark}

\subsection*{The Perron--Frobenius operator.}

Let~$\Phi:\R^d\to\R^d$ be some dynamical system. The following properties hold also, if~$\Phi$, such as the basis functions and the measure~$\mu$ are restricted to some set~$\mathbb{X}$.

Recall equations~\eqref{eq:tdf} and~\eqref{eq:Koopmanop}, stating that the \emph{Perron--Frobenius operator}~$\mathcal{P}_{\mu}:L^1\to L^1$ with respect to the measure~$\mu$ is (uniquely) defined by
\[
\int_{\mathbb{A}} \mathcal{P}_{\mu}h\,d\mu = \int_{\Phi^{-1}(\mathbb{A})} h\,d\mu,\quad\text{for all measurable }\mathbb{A}\,,
\]
and the \emph{Koopman operator}~$\mathcal{K}:L^{\infty}\to L^{\infty}$ is defined by
\[
\mathcal{K}h = h\circ\Phi\,,
\]
respectively. They satisfy the duality relation
\[
\langle \mathcal{P}_{\mu}h_1,h_2\rangle_{\mu} = \langle h_1,\mathcal{K}h_2\rangle_{\mu}\quad\forall h_1\in L^1,\, h_2\in L^{\infty}\,.
\]

We have seen in section~\ref{ssec:EDMD}, that if the data points satisfy~$y_i = \Phi(x_i)$, $i=1,\ldots,m$, then~$K$, with~$K^T = \Psi_Y\Psi_X^+$, is a data-based approximation of the Koopman operator. More precisely, in the infinite-data limit~$m\to\infty$,~$x_i\sim\mu$, the operator~$K$ converges to a Galerkin approximation of~$\mathcal{K}$ on~$\mathbb{V}$ with respect to~$\langle\cdot,\cdot\rangle_{\mu}$. Using~\eqref{eq:EDMDtoGalerkin}, we can also conclude that
\[
\frac{1}{m} \Psi_X\Psi_X^T \to S \text{ as }m\to\infty\,,
\]
where~$S$ is the symmetric positive definite weight matrix from above. This suggests, using~\eqref{eq:adj_op}, that if there is a sufficient amount of data points at hand, then we can approximate the Galerkin projection of the Perron--Frobenius operator~$\mathcal{P}_{\mu}$ to~$\mathbb{V}$ by
\begin{equation}
P_{\mu} = S^{-1}K^T S = (\Psi_X\Psi_X^T)^{-1}\Psi_Y\Psi_X^+ (\Psi_X\Psi_X^T) = (\Psi_X\Psi_X^T)^{-1}\Psi_Y\Psi_X^T\,.
\label{eq:FPO}
\end{equation}
The same matrix representation has been obtained in equation~\eqref{eq:M_P}, by a different consideration. Note also, that if one can compute the matrix~$S$ with~$S_{ij} = \int\psi_i\psi_j\,d\rho$ with respect to a different measure~$\rho$, the Perron--Frobenius operator with respect to~$\rho$ can be approximated as well, one is not restricted to use the empirical distribution~$\mu$ of the data points.

\begin{remark}
All these considerations can be extended to the case where the dynamics~$\Phi$ is non-deterministic.
\end{remark}

\section{On the ergodic behavior of one-step pairs}

We will need the result of this section, equation~\eqref{eq:ergTHMcomposite}, in the following section.

Let the non-deterministic dynamical system~$ \bm\Phi $ be given with transition density function~$ k $, that is,
\begin{equation*}
    \prob\left(\bm\Phi(x)\in\mathbb{A}\right) = \int_{\mathbb{A}}k(x,y)\,d\mu(y),\quad \mathbb{A}\in\mathfrak{B},
\end{equation*}
for a.e.~$y\in\mathbb{X}$. Further, let~$f$ denote the unique invariant density of~$\bm\Phi$,
\begin{equation*}
    \int f(x)k(x,y)\,d\mu(x) = f(y)\quad\text{for a.e. }y\in\mathbb{X},
\end{equation*}
with respect to which~$\bm\Phi$ is geometrically ergodic. Geometric ergodicity of the Langevin process~\eqref{eq:Langevin} has been established in~\cite{MaSt02}.

For $\phi,\psi\in L^2(\mathbb{X})$ we wish to determine the ergodic limit
\begin{equation*}
    \lim_{N\to\infty}\frac{1}{N}\sum_{n=0}^{N-1} \phi\big(\bm\Phi^n(x)\big)\psi\big(\bm\Phi^{n+1}(x)\big)\,.
\end{equation*}
To this end, we consider the non-deterministic dynamical system $\bm\Psi : \mathbb{X} \times \mathbb{X} \to \mathbb{X} \times \mathbb{X} $ with
\begin{equation*}
    \bm\Psi : \begin{pmatrix}
    x \\ y
    \end{pmatrix} \mapsto \begin{pmatrix}
    y \\ \bm\Phi(y)
    \end{pmatrix}\,.
\end{equation*}
In order to find the transition density function of~$\bm\Psi$, note that
\begin{equation*}
    \prob\big(\bm\Psi(x,y)\in\mathbb{A}\times\mathbb{B}\big) = \mathds{1}_{\mathbb{A}}(y)\int_{\mathbb{B}}k(y,z)\,d\mu(z) = \int_{\mathbb{A}\times\mathbb{B}} \delta_y(u)k(u,z)\,d\mu(u) \, d\mu(z)\,,
\end{equation*}
yielding $k_{\bm\Psi}((x,y),(u,z)) = \delta_y(u) k(u,z)$ as the transition density function of~$\bm\Psi$. From this we immediately find its invariant density.

\begin{lemma} \label{lem:invdenscomposite}
The density~$f(x)k(x,y)$ is invariant under~$\bm\Psi$.
\end{lemma}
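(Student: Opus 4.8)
The plan is to verify directly that $\rho(x,y) := f(x)\,k(x,y)$ satisfies the fixed-point equation characterizing invariant densities of the composite system $\bm\Psi$. By the same reasoning that led to the defining relation $\int f(x)k(x,y)\,d\mu(x) = f(y)$ for the invariant density of $\bm\Phi$ (i.e.\ $\P f = f$ with $\P$ as in \eqref{eq:FPOnondet}), a density $\rho$ on $\mathbb{X}\times\mathbb{X}$ is invariant under $\bm\Psi$ exactly when
\begin{equation*}
    \int_{\mathbb{X}}\int_{\mathbb{X}} \rho(x,y)\,k_{\bm\Psi}\big((x,y),(u,z)\big)\,d\mu(x)\,d\mu(y) = \rho(u,z)
\end{equation*}
for a.e.\ $(u,z)\in\mathbb{X}\times\mathbb{X}$, where $k_{\bm\Psi}((x,y),(u,z)) = \delta_y(u)\,k(u,z)$ is the transition density derived immediately before the lemma (the relevant reference measure on the product space being $\mu\otimes\mu$).

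First I would substitute this explicit form of $k_{\bm\Psi}$ into the left-hand side, obtaining
\begin{equation*}
    \int_{\mathbb{X}}\int_{\mathbb{X}} f(x)\,k(x,y)\,\delta_y(u)\,k(u,z)\,d\mu(x)\,d\mu(y).
\end{equation*}
Since $k(u,z)$ is constant in the integration variables $x,y$, it factors out of both integrals. I would then perform the $x$-integration first, invoking the invariance of $f$ to collapse $\int_{\mathbb{X}} f(x)\,k(x,y)\,d\mu(x)$ to $f(y)$. What remains is $k(u,z)\int_{\mathbb{X}} f(y)\,\delta_y(u)\,d\mu(y)$.

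The final step is to evaluate this remaining integral through the reproducing property of the Dirac distribution: using the symmetry $\delta_y(u) = \delta_u(y)$ together with $\int_{\mathbb{X}} f(y)\,\delta_u(y)\,d\mu(y) = f(u)$, the integral equals $f(u)$. Hence the whole expression reduces to $f(u)\,k(u,z) = \rho(u,z)$, which is precisely the invariance identity. The only genuinely delicate point — and thus the expected main obstacle — is the rigorous handling of the Dirac delta, since $\delta_y(\cdot)$ is not a bona fide $L^1$ density but a distributional kernel; both the symmetry $\delta_y(u)=\delta_u(y)$ and the reproducing property must be read in the weak (duality) sense consistent with the convention $\int_{\mathbb{A}}\delta_x\,d\mu = \mathds{1}_{\mathbb{A}}(x)$ fixed earlier in the paper. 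Everything else is a routine application of Fubini's theorem and the defining fixed-point property of $f$.
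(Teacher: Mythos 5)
Your proposal is correct and takes essentially the same route as the paper: substitute the transition density $k_{\bm\Psi}((x,y),(u,z)) = \delta_y(u)\,k(u,z)$, factor $k(u,z)$ out of the double integral, and reduce what remains using the invariance of $f$ together with the reproducing property of the Dirac kernel. The only (immaterial) difference is the order of integration --- you collapse the $x$-integral first via $\int f(x)k(x,y)\,d\mu(x) = f(y)$ and then let the Dirac act on $f$, whereas the paper first collapses the $y$-integral via $\int k(x,y)\delta_y(u)\,d\mu(y) = k(x,u)$ and then applies invariance --- a swap justified by Fubini, and both versions rest on the same weak reading of $\delta_y(u)=\delta_u(y)$ that you rightly flag.
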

\begin{proof}
Direct computation shows
\begin{align*}
\iint f(x)k(x,y) & k_{\bm\Psi}((x,y),(u,z))\,d\mu(x) \, d\mu(y) \\
    & = \iint f(x)k(x,y)  \delta_y(u)k(u,z)\,d\mu(x) \, d\mu(y) \\
    & = k(u,z)\int f(x)\int k(x,y)  \delta_y(u)\,d\mu(y) \, d\mu(x) \\
    & = k(u,z)\int f(x) k(x,u) \, d\mu(x)\\
    & = f(u) k(u,z),
\end{align*}
the last equality following from the invariance of~$f$ under~$\bm\Phi$.
\end{proof}

Geometric ergodicity of~$\bm\Phi$ with respect to~$f$ implies ergodicity of~$\bm\Psi$ with respect to~$f(x)k(x,y)$. Thus, for~$\zeta\in L^2(\mathbb{X}\times\mathbb{X})$ we have
\begin{equation*}
    \lim_{N\to\infty}\frac{1}{N}\sum_{n=0}^{N-1} \zeta\big(\bm\Psi^n(x)\big) = \iint \zeta(x,y)f(x)k(x,y)\,d\mu(x) \, d\mu(y)\,.
\end{equation*}
With $\zeta(x,y) = \phi(x)\psi(y)$ this implies
\begin{equation} \label{eq:ergTHMcomposite}
\begin{split}
    \lim_{N\to\infty}\frac{1}{N}&\sum_{n=0}^{N-1} \phi\big(\bm\Phi^n(x)\big)\psi\big(\bm\Phi^{n+1}(x)\big) \\
    & = \lim_{N\to\infty}\frac{1}{N}\sum_{n=0}^{N-1} \zeta\big(\bm\Psi^n(x)\big) \\
    & = \iint \zeta(x,y)f(x)k(x,y)\,d\mu(x) \, d\mu(y) \\
    & = \iint \phi(x)f(x)\psi(y)k(x,y)\,d\mu(x)\, d\mu(y) \\
    & = \int \psi(y) \int (\phi(x)f(x)) k(x,y)\,d\mu(x) \, d\mu(y) \\
    & = \int \psi\, \P(\phi f)\,d\mu\,,
\end{split}
\end{equation}
where the last equality follows from~\eqref{eq:FPOnondet}, the definition of the Perron--Frobenius operator.

\section{EDMD for the reduced spatial transfer operator} \label{app:EDMD spatial essential}

We shall first discuss the restriction of the spatial transfer operator, introduced in~\eqref{eq:spatial}, to a collection of coordinates which we assume to be sufficient to describe the metastable behavior of the system. Let~$\xi:\qstate\to\mathbb{U}\subset\R^r$ be a smooth, possibly nonlinear mapping of the configuration variable~$q$ to these so-called \emph{essential} (or \emph{reduced}) coordinates. For instance, in case of $n$-butane in Section~\ref{ssec:butane} we have~$r=1$ and~$\xi$ describes the mapping~$q\mapsto\varphi$ given implicitly by~\eqref{eq:dihedral}. Let~$\xi$ have the property that for every regular value~$z\in\mathbb{U}$ of~$\xi$,
\begin{equation*}
    \mathbb{M}_z := \{q\in\qstate\,\vert\, \xi(q) = z\}\subset\qstate
\end{equation*}
is a smooth, codimension~$r$ manifold. We suppose that~$\xi$ is a physically relevant observable of the dynamics, e.g.\ a reaction coordinate.

To define the spatial transfer operator for the essential coordinates, we need a nonlinear variant of Fubini's theorem, the so-called \emph{coarea formula}~\cite[Section 3.2]{Fed69}. For an integrable function~$h:\qstate\to\R$ it holds
\begin{equation} \label{eq:coarea}
    \int_{\qstate} h(q)\,dq = \int_{\mathbb{U}}\left(\int_{\mathbb{M}_z}h G\,d\sigma_z\right)\,dz,
\end{equation}
where~$G(q) = \left|\det \nabla\xi^T\nabla\xi\right|^{-1/2}$ is the Gramian, and~$d\sigma_z$ denotes the Riemannian volume element on~$\mathbb{M}_z$. It follows that the (marginal) canonical density for the observable~$\xi$ is
\begin{equation*}
     f_{\mathbb{U}}(z) = \iint_{\mathbb{M}_z\times\pstate} f_{\rm can}G\,d\sigma_z\,dp\,.
\end{equation*}
Thus, the spatial transfer operator for the essential coordinates given by~$\xi$ reads as
\begin{equation} \label{eq:spatial_ess}
    \Sp^t_{\rm ess}w(z) = \frac{1}{f_{\mathbb{U}}(z)}\iint_{\mathbb{M}_z\times\pstate} \P^t_{\rm Lan}(f_{\rm can}\cdot w\circ\xi)G\,d\sigma_z\,dp\,,
\end{equation}
for $w\in L^2(\mathbb{U},\mu_{\mathbb{U}})$, with~$d\mu_{\mathbb{U}}(z) = f_{\mathbb{U}}(z)dz$.

To see what EDMD does with the molecular trajectory data, we have to consider the limit
\begin{equation*}
    \lim_{N\to\infty}\frac{1}{N}\sum_{n=0}^{N-1}\phi(\xi(q_n))\psi(\xi(q_{n+1})),
\end{equation*}
where $q_0,q_1,q_2,\ldots$ are the positional coordinates of the time-$t$-sampled simulation, and $\phi,\psi:\mathbb{U}\to\R$ are basis functions.
We know that the Langevin dynamics is ergodic with respect to the canonical density~\cite{MaSt02}, hence~\eqref{eq:ergTHMcomposite} yields
\begin{align*}
    \lim_{N\to\infty}\frac{1}{N} \sum_{n=0}^{N-1}\phi(\xi(q_n))\psi(\xi(q_{n+1}))
    & = \iint_{\qstate\times\pstate}\psi(\xi(q))\P^t_{\rm Lan}\left(f_{\rm can}\cdot \phi\circ\xi\right)(q,p)\,dq \, dp \\
    & = \int_{\qstate}\psi(\xi(q))\int_{\pstate}\P^t_{\rm Lan}\left(f_{\rm can}\cdot \phi\circ\xi\right)(q,p)\,dq \, dp \\
    & \stackrel{\eqref{eq:coarea}}{=} \int_{\mathbb{U}}\psi(z)\iint_{\mathbb{M}_z\times\pstate}\P^t_{\rm Lan}\left(f_{\rm can}\cdot \phi\circ\xi\right)(q,p)G(q)\,dp \, d\sigma_z \, dz \\
    & \stackrel{\eqref{eq:spatial_ess}}{=} \int_{\mathbb{U}} \psi(z) f_{\mathbb{U}}(z) S^t_{\rm ess}\phi(z)\,dz \\
    & = \innerprod{\psi}{S^t_{\rm ess}\phi}_{\mu_{\mathbb{U}}}.
\end{align*}
Due to ergodicity it follows also
\begin{align*}
    \lim_{N\to\infty}\frac{1}{N}\sum_{n=0}^{N-1}\phi(\xi(q_n))\psi(\xi(q_n))
    & = \iint_{\qstate\times\pstate}\psi(\xi(q))\phi(\xi(q)) f_{\rm can}(q,p)\,dq \, dp \\
    & \stackrel{\eqref{eq:coarea}}{=} \int_{\mathbb{U}}\psi(z)\phi(z)\iint_{\mathbb{M}_z\times\pstate} f_{\rm can}(q,p)G(q)\,dp\,d\sigma_z\,dz \\
    & = \innerprod{\psi}{\phi}_{\mu_{\mathbb{U}}}.
\end{align*}
Comparing with~\eqref{eq:EDMDtoGalerkin}, we thus see that EDMD converges in the infinite-data limit to a Galerkin projection in~$L^2(\mathbb{U},\mu_{\mathbb{U}})$ of the spatial transfer operator for the essential coordinates given by~$\xi$.

\section{Derivation of the EDMD-discretized Koopman operator}
\label{sec:Derivation of EDMD}

Let the finite dictionary~$\mathbb{D} = \{\psi_1,\ldots,\psi_k\}$ of piecewise continuous functions be given, and define~$\mathbb{V}$ to be the linear space spanned by~$\mathbb{D}$. We will give a step-by-step derivation of the matrix representation of the EDMD-discretized Koopman operator~$ K: \mathbb{V} \to \mathbb{V} $ with respect to the basis~$ \mathbb{D} $. Let us denote also with~$ K \in \R^{k\times k} $ this matrix representation, and note that the matrix~$ K $ acts by multiplication from the left, i.e.\ if the vector~$ c \in \R^k $ represents the function~$ \sum_i c_i \psi_i $, then~$ K \, c $ represents its image under the discrete Koopman operator. Recall that~$ \psi : \mathbb{X} \to \R^k $ denotes the column-vector valued function with~$ [\psi(x)]_i = \psi_i(x) $.

Now, EDMD is an over-determined Petrov--Galerkin method~\eqref{eq:PetrovGalerkinLS},
\begin{equation} \label{eq:EDMD_LS}
    \sum_{\ell=1}^l\sum_{i=1}^k \innerprod{\delta_{x_{\ell}}}{\mathcal{K}\psi_i - K\psi_i}^2 = \min!\,,
\end{equation}
where~$x_1\ldots,x_l$ are the initial data points and~$y_1,\ldots,y_l$ denote their images under the dynamics. If there was just one single data point~$x_{\ell}$, we would like to find a matrix~$K$ satisfying the equation
\begin{equation*}
    \innerprod{\delta_{x_{\ell}}}{\mathcal{K}\Big(\sum_i c_i \, \psi_i\Big)} =
    \innerprod{\delta_{x_{\ell}}}{\sum_i\Big(\sum_j K_{ij} \, c_j\Big)\psi_i}
\end{equation*}
for every~$c\in\R^k$. Rearranging the terms and using $\mathcal{K}\psi_i(x_{\ell}) = \psi_i(y_{\ell})$ yields
\begin{equation*}
    \sum_i c_i \, \psi_i(y_{\ell}) = \sum_j c_j \sum_i K_{ij} \, \psi_i(x_{\ell})\,,
\end{equation*}
or, in vectorial notation,~$ c^T \, \psi(y_{\ell}) = c^T \, K^T \psi(x_{\ell})$. Since this has to hold true for every~$ c \in \R^k $, we have~$ \psi(y_{\ell}) = K^T \psi(x_{\ell})$. From this it follows by putting the column vectors~$ \psi(x_{\ell}) $ and~$ \psi(y_{\ell}) $ side-by-side for multiple data points~$ x_{\ell} $ to form the matrices~$ \Psi_X $ and~$ \Psi_Y $, respectively, that~\eqref{eq:EDMD_LS} is equivalent with
\begin{equation*}
    \norm{\Psi_Y - K^T \Psi_X}_F^2 = \min!,
\end{equation*}
where~$ \norm{\cdot}_F $ denotes the Frobenius norm. Thus, EDMD can be viewed as a DMD of the transformed data $ \Psi_X $ and $ \Psi_Y $. The solution of the minimization problem is given by
\begin{equation*}
    K^T = \Psi_Y \Psi_X^+,
\end{equation*}
where~$ \Psi_X^+ $ is the pseudoinverse of~$ \Psi_X $.
\end{appendix}

\end{document}